\renewcommand{\eprint}[1]{\href{http://arxiv.org/abs/#1}{\texttt{arXiv:#1}}}
\numberwithin{equation}{section}
\newtheoremstyle{newthm}
{.5em} % Space above
{.5em} % Space below
{\it}       % Body font
{}          % Indent amount
{\bf}       % Theorem head font
{.}         % Punctuation after theorem head
{.5em}      % Space after theorem head
{}          %
\theoremstyle{newthm}
\newtheorem{thm}{Theorem}[section]
\newtheorem{lem}[thm]{Lemma}
\newtheorem{prp}[thm]{Proposition}
\newtheoremstyle{newdef}
{.5em} % Space above
{.5em} % Space below
{}          % Body font
{}          % Indent amount
{\bf}       % Theorem head font
{.}         % Punctuation after theorem head
{.5em}      % Space after theorem head
{}          %
\theoremstyle{newdef}
\newtheorem{defn}[thm]{Definition}
\newtheorem{rem}[thm]{Remark}
\newcommand{\R}{\mathbf{R}}
\renewcommand{\S}{\mathbf{S}}
\newcommand{\B}{\mathbf{B}}
\newcommand{\RP}{\mathbf{RP}}
\DeclareMathOperator{\im}{im}
\DeclareMathOperator{\cat}{cat}
\newcommand{\g}{\gamma}
\renewcommand{\d}{\delta}
\newcommand{\e}{\varepsilon}
\newcommand{\z}{\zeta}
\renewcommand{\th}{\theta}
\renewcommand{\k}{\kappa}
\newcommand{\n}{\nu}
\newcommand{\x}{\xi}
\newcommand{\s}{\sigma}
\renewcommand{\t}{\tau}
\newcommand{\f}{\phi}
\newcommand{\vp}{\varphi}
\newcommand{\G}{\Gamma}
\newcommand{\Th}{\Theta}
\newcommand{\F}{\Phi}
\renewcommand{\O}{\Omega}
\newcommand{\p}{\partial}
\begin{document}

\title{Minimal networks on balls and spheres for almost standard metrics}

\author{Luciano Sciaraffia}
\address{Dipartimento di Matematica, Università di Pisa, Largo Bruno Pontecorvo 5, 56127 Pisa, Italy.}
\email{luciano.sciaraffia@phd.unipi.it}

%\subjclass[2010]{53C22 (58E10)}
%\keywords{}

%\thanks{}

\begin{abstract}
We study the existence of minimal networks in the unit sphere $\S^d$ and the unit ball $\B^d$ of $\R^d$ endowed with Riemannian metrics close to the standard ones.
We employ a finite-dimensional reduction method, modelled on the configuration of $\th$-networks in $\S^d$ and triods in $\B^d$, jointly with the Lusternik--Schnirelmann category.
\end{abstract}

\maketitle

\tableofcontents

%----------------------------------------------------------------------------------

\section{Introduction and main results}

In this paper, we are devoted to the study of \textit{minimal networks}, often referred to as \textit{geodesic nets} in a Riemannian context, which can be viewed as singular generalisations of geodesics.
These objects naturally arise as solutions to the classical \textit{Steiner problem:} given a finite set of points, find a connected set minimising length and spanning said points.
They can also be found as solutions to minimising problems for the length in one-dimensional homology classes, as discussed, for example, in \cite{Morgan-Currents1989}.
As a consequence of the first variation of length, minimal networks are characterised by being composed as unions of geodesic segments, intersecting only at their endpoints in junctions of multiplicity at most three, and the angles formed at each junction between two consecutive segments are greater or equal than $2\pi/3$.
Therefore, at a triple junction equal angles of $2\pi/3$ are formed.
For a comprehensive treatment of the subject, we refer to \cite{IvanovTuzhilin-MinNets1994}.
For further generalisations and variations, consult \cite{IvanovTuzhilin-MinNetsRev2016}.

One of the most renowned problems in differential geometry is to find and count closed geodesics on a given compact Riemannian manifold, which represent the simplest instances of closed minimal networks.
Although this problem remains unsolved in full generality, we highlight the two-dimensional case.
In the case of compact surfaces with positive genus, it is always possible to find a closed geodesic representative within a given nontrivial homotopy class by directly minimising the length functional.
However, in the simply connected case, the situation becomes more intricate, as any direct minimisation leads to degenerate curves.
The first result in this direction was due to Birkhoff \cite{Birkhoff-DynSys2degs1917}, who introduced the min-max technique and established the existence of a closed geodesic on an arbitrary Riemannian 2-sphere.
Regarding multiplicity, Lusternik--Schnirelmann \cite{LusternikSchnirelmann-3geos1929} claimed that any Riemannian 2-sphere possesses at least three embedded closed geodesics.
Their result remained contentious due to a gap in the proof, but nowadays the gap is considered filled due, for instance, to the work of Grayson \cite{Grayson-ShortCurves1989}.
We also mention an approach, first suggested by Poincaré \cite{Poincare-GeosConvex1905}, to find closed geodesics in a convex sphere.
He stated that a closed geodesic could be found minimising the length among all curves which divide the sphere into two pieces of equal total curvature.
This plan was brought to fruition by Croke \cite{Croke-ShortestGeo1982}, and a simplified proof was later provided by Hass--Morgan \cite{HassMorgan-Bubbles1996}.
It is also noteworthy to mention the works of Berger--Bombieri \cite{BergerBombieri-PoincareIso1981} and Klingenberg \cite{Klingenberg-PoincareGeoConvex2004} related to this approach.

Much like the case of geodesics, finding minimal networks in surfaces with positive genus can be achieved by the direct method of the calculus of variations.
In their work \cite{MartelliNovagaPludaRiolo-Spines2017}, Martelli--Novaga--Pluda--Riolo extensively explore the existence and classification of \textit{minimal spines}, that is, networks whose complement can be retracted to a disc.
Specifically, they establish that on every orientable surface of positive genus there exists a spine of minimal length, which in the case of a torus must be a $\th$-\textit{network}, that is, a closed network consisting of three geodesics and two triple junctions resembling the Greek letter theta.
Furthermore, they provide a comprehensive classification of minimal spines on flat tori, including an explicit counting function defined in their moduli space. 
A partial result in hyperbolic surfaces was also obtained, which states that the number of minimal spines with uniformly bounded length is finite.
It is also worth mentioning earlier work by Ivanonv--Ptitsyna--Tuzhilin \cite{IvanovPtitsynaTuzhilin-MinNetsTori1992}, where all the minimal networks in flat tori are classified up to ambient isotopies, accompanied by a specific criterion for determining whether a given type of minimal network can be embedded on a given flat torus.

The case of a sphere, in analogy with closed geodesics, has proven to be more challenging, as the previous direct minimisation methods again only yield degenerate curves.
For a standard round sphere, however, Heppes \cite{Heppes-NetsSphere1964} exploited the constant curvature and the Gauss--Bonnet Theorem to show that there are exactly nine types of minimal networks other than closed geodesics, which are also unique up to isometries.
These networks partition the sphere into three to ten, or twelve regions, each bounded by at most five arcs.
When it comes to non-standard Riemannian metrics, the situation is considerably more intricate, and limited knowledge is available.
Morgan \cite{Morgan-SoapBubbles1994} and Hass--Morgan \cite{HassMorgan-Bubbles1996} studied the \textit{isoperimetric problem} for clusters, which amounts to finding the sets that minimise length and which enclose prescribed weighted areas, where the weight is a positive function defined in the manifold.
Then, inspired by Poincaré's original idea mentioned earlier, and building upon the techniques developed in their previous work, in \cite{HassMorgan-NetsSphere1996} they used the total Gaussian curvature as the weighted area for clusters dividing a given convex Riemannian sphere into three different regions.
In this setting, they showed the existence of at least one minimal network, which by a straightforward combinatorial argument must be either a $\th$-\textit{network}, \textit{eyeglasses}, or a \textit{figure 8}, with the later serving as a degenerate limit between the former two (see Figure \ref{fig:net-types-S}).
Moreover, as a corollary, they showed that metrics close to the standard one in $C^2$ must support a minimal $\th$-network, just as the round metric of constant curvature does.
It is remarkable that, to this day, this remains one of the few existing results of its kind.
In particular, it is not known whether every Riemannian metric on a sphere with positive curvature admits a minimal $\th$-network, as far as we are aware.

\begin{figure}
\centering

\begin{tikzpicture}[x=1cm,y=1cm,scale=.8]

% theta

\begin{scope}[xscale=-1]
\draw[line width=1pt, smooth, shift={(5 cm, 0 cm)}] plot[samples=200,domain=-2:0] (\x,{.54*(-\x+1)*sqrt(2+\x)});
\draw[line width=1pt, smooth, shift={(5 cm, 0 cm)}] plot[samples=200,domain=-2:0] (\x,{-.54*(-\x+1)*sqrt(2+\x)});
\end{scope}

\draw[line width=1pt, smooth, shift={(-5 cm, 0 cm)}] (0,.77) -- (0,-.77);

%\draw[line width=1pt, smooth, shift={(-5 cm, 0 cm)}] plot[samples=200,domain=-2:0] function {.54*(-x+1)*sqrt(2+x)};
%\draw[line width=1pt, smooth, shift={(-5 cm, 0 cm)}] plot[samples=200,domain=-2:0] function {-.54*(-x+1)*sqrt(2+x)};
\draw[line width=1pt, smooth, shift={(-5 cm, 0 cm)}] plot[samples=200,domain=-2:0] (\x,{.54*(-\x+1)*sqrt(2+\x)});
\draw[line width=1pt, smooth, shift={(-5 cm, 0 cm)}] plot[samples=200,domain=-2:0] (\x,{-.54*(-\x+1)*sqrt(2+\x)});

% eyeglasses

\begin{scope}[xscale=-1]
\draw[line width=1pt, smooth, shift={(0 cm, 0 cm)}] plot[samples=200,domain=-2:-.5] (\x,{(-\x-.5)*sqrt(4+2*\x)});
\draw[line width=1pt, smooth, shift={(0 cm, 0 cm)}] plot[samples=200,domain=-2:-.5] (\x,{-(-\x-.5)*sqrt(4+2*\x)});
\end{scope}

\draw[line width=1pt, smooth, shift={(0 cm, 0 cm)}] (-.5,0) -- (.5,0);

%\draw[line width=1pt, smooth, shift={(3 cm, 0 cm)}] plot[samples=200,domain=-2:-.5] function {(-x-.5)*sqrt(4+2*x)};
%\draw[line width=1pt, smooth, shift={(3 cm, 0 cm)}] plot[samples=200,domain=-2:-.5] function {-(-x-.5)*sqrt(4+2*x)};
\draw[line width=1pt, smooth, shift={(0 cm, 0 cm)}] plot[samples=200,domain=-2:-.5] (\x,{(-\x-.5)*sqrt(4+2*\x)});
\draw[line width=1pt, smooth, shift={(0 cm, 0 cm)}] plot[samples=200,domain=-2:-.5] (\x,{-(-\x-.5)*sqrt(4+2*\x)});

% 8
\draw[line width=1pt, smooth, shift={(5 cm, 0 cm)}] plot[samples=200,domain=-45:45] (\x: {2*sqrt(cos(2*\x))});
\draw[line width=1pt, smooth, shift={(5 cm, 0 cm)}] plot[samples=200,domain=-45:45] (\x: {-2*sqrt(cos(2*\x))});

\end{tikzpicture}

\caption{The three possible minimal networks which divide a sphere into three regions: the $\th$-\textit{network,} the \textit{eyeglasses,} and the \textit{figure 8.}}
\label{fig:net-types-S}

\end{figure}
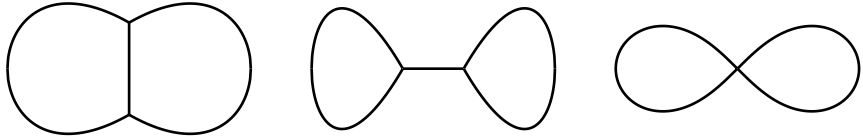

The first of our main results is a generalisation of \cite{HassMorgan-NetsSphere1996}*{Corollary 1} to higher dimensions, which also includes a lower bound on the number of such minimal networks.

\begin{thm}\label{thm:mainS}
Let $d \geq 2$ be an integer, and let $g_0$ be the standard round metric on the sphere $\S^d$.
Then for any Riemannian metric $g$ on $\S^d$ sufficiently close to $g_0$ in the $C^2$ topology there exist at least four minimal $\th$-networks on $(\S^d,g)$, when $d \neq 3,4$, and at least five, when $d = 3,4$.
\end{thm}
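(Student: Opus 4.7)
The plan is to carry out a Morse--Bott style finite-dimensional reduction along the manifold of round-sphere minimal $\theta$-networks and then apply Lusternik--Schnirelmann theory to the reduced functional. First I would model the space of $\theta$-networks as a smooth Banach manifold $\mathcal N$ of $H^2$-maps from the abstract $\theta$-graph $\G$ into $\S^d$ (quotiented by reparametrisation of each edge), so that the length $L_g \colon \mathcal N \to \R$ is of class $C^2$ and its critical points are exactly the minimal $\theta$-networks on $(\S^d,g)$.

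For the round metric $g_0$, every minimal $\theta$-network consists of a pair of antipodal points joined by three great semicircles whose tangent vectors at each endpoint form an equiangular tripod. Such networks form a single $O(d+1)$-orbit $Z \subset \mathcal N$, a compact smooth manifold of dimension $3(d-1)$, identified with $O(d+1)/H$ for an explicit subgroup $H$ built out of the dihedral symmetry $D_6$ of the $\theta$-graph and rotations in the $(d-2)$-plane orthogonal to both the axis and the Mercedes 2-plane. The analytic core is to verify that $Z$ is a nondegenerate critical manifold of $L_{g_0}$: the second variation at any $\gamma \in Z$ splits over the three arcs as a system of Jacobi operators coupled at the two junctions via the angle condition, and a direct calculation should identify its kernel with $T_\gamma Z$ (realised by the $O(d+1)$-action and by infinitesimal Mercedes rotations) and show positive definiteness in the transverse directions. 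Granting this, a standard Lyapunov--Schmidt reduction produces, for every metric $g$ close to $g_0$ in the $C^2$ topology, a smooth map $\Psi_g \colon Z \to \mathcal N$ close to the inclusion such that critical points of $L_g$ in a neighbourhood of $Z$ are in bijection with those of the reduced functional $\F_g := L_g \circ \Psi_g \colon Z \to \R$.

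The conclusion then reduces to Lusternik--Schnirelmann theory on the compact manifold $Z$, which guarantees at least $\cat(Z)$ critical points of $\F_g$. I would estimate $\cat(Z)$ through the $\mathbb{Z}/2$-cuplength, using the projection assigning to a network its axis, which presents $Z$ as a bundle over $\RP^d$ whose fibre contains the Grassmannian $Gr(2,d)$ of Mercedes 2-planes and the circle $SO(2)/\mathbb{Z}_3$ of Mercedes phases. The cuplength contributed by $\RP^d$ combined with the circle factor yields $\cat(Z) \geq 4$ unconditionally, and in dimensions $d=3,4$ an additional Stiefel--Whitney class on the Grassmannian fibre survives in the cohomology of $Z$ and enters a nontrivial cup product, raising the bound to $\cat(Z) \geq 5$ and matching the stated improvement.

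The main obstacle I anticipate is the Morse--Bott nondegeneracy at the triple junctions: the angle condition must be encoded as a natural boundary condition for the Jacobi operator on each arc, the coupled boundary-value problem must be shown to have exactly the expected kernel, and the functional-analytic setup must handle both the reparametrisation gauge and the corner smoothness so that the implicit function theorem applies uniformly in $g$. A secondary difficulty is the cohomological bookkeeping needed to pin down the exact jump from $4$ to $5$ in dimensions $3$ and $4$, since the Grassmannian bundle over $\RP^d$ is genuinely twisted and the standard product estimates for category must be sharpened.
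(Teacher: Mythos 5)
Your analytic strategy is essentially the one the paper follows: the round minimal $\th$-networks form a nondegenerate critical manifold diffeomorphic to the Stiefel manifold $V_3(\R^{d+1})$ (your $O(d+1)/H$ is its quotient by the finite symmetry group of the $\th$-graph), and a Lyapunov--Schmidt reduction plus Lusternik--Schnirelmann theory yields the multiplicity. The paper implements the reduction more concretely: it builds, for each frame and each candidate second junction $y$ near $-x$, three constant-curvature curves, takes their total length as the reduced functional, and then must prove separately (Lemma \ref{lem:linear-inv-S}) that criticality of this reduced functional forces the curvatures and the balancing defect at $y$ to vanish; your abstract $H^2$ Banach-manifold reduction would make that last step automatic at the price of setting up the Fredholm theory at the corners. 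Both routes are viable. One small correction: the transverse Hessian is not positive definite --- the critical manifold is a manifold of maxima of the reduced functional (Lemma \ref{lem:Eg-nondeg}), and in the full space of networks the second variation is indefinite transversally. Nondegeneracy, not definiteness, is what the reduction needs.

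The genuine gap is in the topological step. Your proposed source of cuplength --- classes pulled back from the $\RP^d$ of axes together with a ``circle of Mercedes phases'' --- cannot produce the stated bounds: $V_3(\R^{d+1})$ is $(d-3)$-connected, so for $d \geq 4$ it is simply connected and the generator of $H^1(\RP^d;\mathbf{Z}/2)$ pulls back to zero; likewise there is no degree-one class carried by the phase circle, which sits inside a twisted fibre. The classes that actually do the work are the exterior generators $x_{d-2}, x_{d-1}, x_d$ of $H^*(V_3(\R^{d+1});\mathbf{Z}/2)$, whose triple product is the fundamental class (giving $\cat \geq 4$), together with the relation $x_{d-2}^2 = x_{2d-4} \neq 0$, which holds precisely for $d \leq 4$ and yields a nontrivial fourfold product, hence the jump to $\cat \geq 5$ for $d = 3,4$; this is the content of Nishimoto's results that the paper cites. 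If you insist on working on the symmetry quotient $Z$ rather than on $V_3(\R^{d+1})$ itself, you need a separate category computation there --- the paper sidesteps this by running the whole argument on the redundant Stiefel parametrisation, observing only in Remark \ref{rem:S3action-S} that passing to the quotient could improve the count, and for $d=2$ the quotient is an $\S^1$-bundle over $\S^2$ whose category equals $4$ by Iwase's theorem, not by a cuplength estimate. Finally, for $d = 2$ note that $V_3(\R^3) \cong O(3)$ is disconnected, so the Lusternik--Schnirelmann count on the Stiefel manifold must be halved, which is how the paper arrives at four networks in that case.
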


An analogous problem can also be explored when the manifold has boundary.
In this context, Freire \cite{Freire-SteinerConvex2011} delved into the existence of minimal networks within bounded, strictly convex planar sets which satisfy a Neumann condition at the boundary.
Freire's investigation unveiled three distinct types of possible configurations: a \textit{triod}, a \textit{double triod}, and a \textit{hexagonal cell}, as illustrated in Figure \ref{fig:net-types-B}.
Surprisingly, and contrary to previous beliefs, he showed that minimal triods might not always exist, constructing explicit counterexamples.
Nevertheless, existence of minimal triods can be guaranteed under some pinching conditions on the boundary, under which at least two different solutions exist \cite{Freire-SteinerConvex2011}*{Proposition 3.2}.
Our next theorem gives a generalisation of this fact to higher dimensions.

\begin{figure}
\centering

\begin{tikzpicture}[x=1cm,y=1cm,scale=1.4]

% triod
\draw[line width=.5pt, smooth, shift={(-3 cm, 0 cm)}] (0,0) circle (1cm);

\draw[line width=1pt, smooth, shift={(-3 cm, 0 cm)}] (0,0) -- (0,1);
\draw[line width=1pt, smooth, shift={(-3 cm, 0 cm)}] (0,0) -- (-.87,-.5);
\draw[line width=1pt, smooth, shift={(-3 cm, 0 cm)}] (0,0) -- (.87,-.5);

% double triod
\draw[line width=.5pt, smooth, shift={(0 cm, 0 cm)}] (0,0) ellipse (1.3cm and 1cm);

\draw[line width=1pt, smooth, shift={(0 cm, 0 cm)}] (-.78,.8) -- (-.32,0);
\draw[line width=1pt, smooth, shift={(0 cm, 0 cm)}] (-.78,-.8) -- (-.32,0);
\draw[line width=1pt, smooth, shift={(0 cm, 0 cm)}] (.78,.8) -- (.32,0);
\draw[line width=1pt, smooth, shift={(0 cm, 0 cm)}] (.78,-.8) -- (.32,0);
\draw[line width=1pt, smooth, shift={(0 cm, 0 cm)}] (-.32,0) -- (.32,0);

% hexagon
\draw[line width=.5pt, smooth, shift={(3 cm, 0 cm)}] (0,0) circle (1cm);

\draw[line width=1pt, smooth, shift={(3 cm, 0 cm)}] (.7,0) -- (.35,.61);
\draw[line width=1pt, smooth, shift={(3 cm, 0 cm)}] (.7,0) -- (.35,-.61);
\draw[line width=1pt, smooth, shift={(3 cm, 0 cm)}] (.35,.61) -- (-.35,.61);
\draw[line width=1pt, smooth, shift={(3 cm, 0 cm)}] (.35,-.61) -- (-.35,-.61);
\draw[line width=1pt, smooth, shift={(3 cm, 0 cm)}] (-.35,.61) -- (-.7,0);
\draw[line width=1pt, smooth, shift={(3 cm, 0 cm)}] (-.35,-.61) -- (-.7,0);

\draw[line width=1pt, smooth, shift={(3 cm, 0 cm)}] (.7,0) -- (1,0);
\draw[line width=1pt, smooth, shift={(3 cm, 0 cm)}] (.35,.61) -- (.5,.87);
\draw[line width=1pt, smooth, shift={(3 cm, 0 cm)}] (-.35,.61) -- (-.5,.87);
\draw[line width=1pt, smooth, shift={(3 cm, 0 cm)}] (-.7,0) -- (-1,0);
\draw[line width=1pt, smooth, shift={(3 cm, 0 cm)}] (-.35,-.61) -- (-.5,-.87);
\draw[line width=1pt, smooth, shift={(3 cm, 0 cm)}] (.35,-.61) -- (.5,-.87);

\end{tikzpicture}

\caption{The three possible minimal networks in a strictly convex domain: the \textit{triod,} the \textit{double triod,} and the \textit{hexagonal cell.}}
\label{fig:net-types-B}

\end{figure}
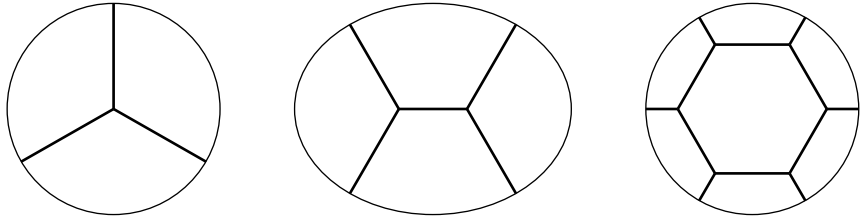

\begin{thm}\label{thm:mainB}
Let $d \geq 2$ be an integer, and let $\d_0$ be the standard Euclidean metric on the ball $\B^d$.
Then for any Riemannian metric $g$ on $\B^d$ sufficiently close to $\d_0$ in the $C^2$ topology there exist at least two, when $d = 2$, at least four, when $d = 3$, and at least three, when $d \geq 4$, minimal triods on $(\B^d,g)$.
\end{thm}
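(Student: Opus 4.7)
My plan is to carry out the Lyapunov--Schmidt reduction announced in the abstract, now modelled on triods in $(\B^d, \d_0)$, and then apply the Lusternik--Schnirelmann theorem to the resulting reduced length.

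\textbf{Step 1: the flat critical manifold.} I first identify the minimal triods of $(\B^d, \d_0)$. Such a triod is made of three Euclidean segments meeting at $120^\circ$ in a common interior vertex $x$, each arriving orthogonally to $\p\B^d$ (the free-boundary condition). Orthogonality of a straight segment from $x$ to $p\in\S^{d-1}$ forces $p=\pm x/|x|$ as soon as $x\neq 0$, and three distinct such endpoints coexist only when $x=0$. Consequently every flat triod is centred at the origin and is determined by its three outer vertices, which form an equilateral triangle inscribed in $\S^{d-1}$. The flat triods therefore constitute a compact smooth manifold
\[
M_d := \{\{v_1,v_2,v_3\}\subset \S^{d-1} : v_i\cdot v_j = -\tfrac12 \text{ for } i\neq j\},
\]
on which the length is identically $3$.

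\textbf{Step 2: reduction.} I next set up a Banach manifold of admissible triods (maps of an abstract tripod into $\B^d$ with the three tips constrained to $\p\B^d$), study the Jacobi operator along each flat triod $\x\in M_d$, and check that its kernel coincides with $T_\x M_d$, that is, that the flat critical manifold is non-degenerate. A standard implicit function theorem argument then yields, for every Riemannian metric $g$ sufficiently $C^2$-close to $\d_0$, a smooth map $\G_g : M_d \to \{\text{triods}\}$ --- equal to the identity when $g=\d_0$ --- which solves the Euler--Lagrange equations modulo the tangent space to $M_d$. The \emph{reduced length} $L_g(\x) := \mathrm{length}_g \G_g(\x)$ is then a smooth function on the closed manifold $M_d$, and its critical points are in bijection with minimal $g$-triods lying near the flat family.

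\textbf{Step 3: category count.} Applying the Lusternik--Schnirelmann theorem to $L_g$ on $M_d$ produces at least $\cat(M_d)$ critical points. The stated lower bounds follow from the purely topological computation
\[
\cat(M_d) = \begin{cases} 2 & d=2,\\ 4 & d=3,\\ 3 & d\geq 4.\end{cases}
\]
For $d=2$ one sees directly that $M_d\cong \S^1$. For $d=3$, $M_d$ carries enough topology to give category $4$ via its close relationship with $SO(3)\cong\RP^3$, whose category equals $4$. For $d\geq 4$, the projection to the Grassmannian $\mathrm{Gr}(2,\R^d)$ pulls back a class of cup-length $2$ in $\mathbb{Z}/2$-cohomology via the inclusion $\mathrm{Gr}(2,\R^3)\cong\RP^2\hookrightarrow\mathrm{Gr}(2,\R^d)$, giving $\cat(M_d)\geq 3$, while $\cat(M_d)\leq 3$ is shown by exhibiting an explicit categorical cover by three sets.

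\textbf{Main obstacle.} The technical heart of the argument is Step 2. Choosing a functional-analytic framework that simultaneously handles the triple junction in the interior, the Neumann condition on $\p\B^d$, and the Fredholm property of the Jacobi operator, and then verifying non-degeneracy of the second variation along each flat triod, so that the kernel is exactly $TM_d$, is the delicate step. Once this reduction is available, the category computations and the conclusion via Lusternik--Schnirelmann are relatively standard.
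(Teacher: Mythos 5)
Your overall strategy (finite‑dimensional critical manifold of flat triods, non‑degeneracy in the sense of Bott, reduction, Lusternik--Schnirelmann count) is the same as the paper's, and your Step 1 and Step 3 are essentially correct: the flat minimal triods are exactly the origin‑centred equiangular ones, your $M_d$ is the quotient $V_2(\R^d)/S_3$ of the paper's parameter space by relabelling of edges, and the category values you quote match the bounds the paper obtains (the paper works with ordered frames on $V_2(\R^d)$ and notes in Remark \ref{rem:S3action} that passing to the quotient could only improve the count; your cup‑length argument for $d\geq 4$ would need to be checked, but the lower bound $\cat(M_d)\geq\cat(V_2(\R^d))=3$ also follows because preimages of contractible sets under $V_2(\R^d)\to M_d$ are contractible).

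The genuine gap is Step 2, which you correctly identify as the heart of the matter but then leave entirely unexecuted. Two distinct things must actually be proved there: (i) non‑degeneracy, i.e.\ that the second variation of length at a flat triod has kernel exactly the tangent space to the critical manifold; and (ii) the ``natural constraint'' property, i.e.\ that critical points of the reduced length are genuine critical points of the full length functional and not merely critical in the directions retained by the reduction. Asserting that ``a standard implicit function theorem argument'' produces $\G_g$ does not discharge either. Moreover, your proposed framework (Banach manifold of tripod maps, Fredholm theory for the Jacobi operator with a triple junction and a Neumann condition) is considerably heavier than necessary: the paper sidesteps it by enlarging the parameter space to $\B^d\times V_2(\R^d)$ (junction position $x$ plus frame $e$) and defining the competitor triods completely explicitly as the three $g$-geodesics issued from $x$ in the $g$-orthonormalised equiangular directions, run until they hit $\p\B^d$. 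With this choice the balance condition at the junction holds by construction, the only possibly violated condition is orthogonality at the boundary, and the first variation formula \eqref{eq:1var} reduces criticality of $F_g$ to the invertibility of the explicit linear map $\x\mapsto(dq_j(x,e)\cdot\x)_{j}$ (Lemma \ref{lem:linearinv}), a finite‑dimensional linear‑algebra computation. Non‑degeneracy is likewise checked on the explicit function $F_0$ in \eqref{eq:Fg00}, where $\nabla_x^2F_0(0,e)(v,v)=-\sum_j|P_jv|^2<0$, and the reduction in $x$ is a finite‑dimensional implicit function theorem (Lemma \ref{lem:Lyapunov-Fg}). Until you either carry out the infinite‑dimensional analysis you propose or replace it by an explicit construction of this kind, the proof is incomplete precisely at its decisive step.
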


\begin{rem}
The multiplicities stated in both Theorems \ref{thm:mainS} and \ref{thm:mainB} are not optimal, as our method does not yield the exact number.
They are given by a lower bound on the Lusternik--Schnirelmann category (see Definition \ref{def:LS}) of suitable compact manifolds, whose exact values we are unable to compute for dimensions $d \geq 3$ (see Remarks \ref{rem:S3action} and \ref{rem:S3action-S}).
It could be possible to improve this bound, and in fact we propose a conjecture: the number of minimal $\th$-networks in the case of the sphere, and minimal triods in the case of the ball, as presented in the aforementioned theorems, tends to infinity as the dimension $d \to \infty$.
\end{rem}

We now outline how we prove Theorems \ref{thm:mainS} and \ref{thm:mainB}. 
In each case, we first identify a suitable finite-dimensional manifold parametrising the set of minimal networks under consideration for the standard metrics.
We show that these manifolds are \textit{non-degenerate in the sense of Bott} or, in other words, at each point of the manifold the second variation for the length functional has a kernel corresponding to the tangent space of the manifold at that point.
We then slightly perturb the Riemannian metrics $g$ to still find a set of approximately minimal networks parametrised by the minimal manifold, and define a \textit{reduced length functional} over said manifold as the length of the constructed networks.
For the case of the sphere, these networks consist of curves with “constant curvature,” in the sense that the curvature vector $k$ of each edge $\g$ has constant size $|k|_g$, which might not be zero, and the curves themselves might not meet the equiangularity condition at one of the junctions of the network.
For the case of the ball, we construct them as geodesic segments which might not be orthogonal, with respect to the perturbed metric, to the boundary $\p\B^d$.
The subsequent step involves demonstrating that these constraints are natural: we establish that the critical points of the reduced length functional are also critical points for the length functional in the space of networks.
We point out that is in this step that we need the $C^2$-closeness of the perturbed metric $g$, since we need to approximate the variational fields of the networks generated by tangent vectors to the minimal manifolds by means of an ODE that involves second derivatives of $g$. 
Finally, we take advantage of the non-degeneracy to perform a Lyapunov--Schmidt type reduction to localise the critical points of the reduced functional.
A direct application of the Lusternik--Schnirelman Theorem \ref{thm:LS} then yields the multiplicity of solutions.

To conclude this introductory discussion, let us highlight the main challenges one finds when trying to construct minimal or stationary networks.
The first one is that existence, as we see in the results of Freire, is not guaranteed even under some reasonable hypotheses like convexity.
Another major difficulty is that the standard techniques used for finding closed geodesics, such as Morse theory and curvature flows, may not lead to a nontrivial minimal network but rather to a closed geodesic.
Indeed, this is the case in, for example, \cites{NabutovskyRotman-ShapeNets2007, Rotman-ShortGeoNet2007, ChodoshMantoulidis-PWidths2023}.
However, this outcome can be ruled out in some specific situations, as demonstrated, for example, in \cites{Cheng-StableNets2024, ChambersLiokumovichNabutovskyRotman-GeoNetsNonComp2023}.
We also mention the works \cites{staffa2023bumpy, LiokumovichStaffa-GenDen2024}, where generic non-degeneracy and density of stationary networks is showed with respect to the Riemannian metric, respectively.
Our treatment here differs in that we prescribe the structure of the minimal networks we want to construct without relying on a generic choice of the metric.
Additionally, the networks we are approximating are degenerate in the sense that they have nontrivial normal Jacobi fields.
For a survey and other problems on the matter, we refer to \cite{NabutovskyParsch-Nets2023}.

The paper is organised as follows.
In Section \ref{sec:defs} we introduce the notion of network and minimality, recall the first variation formula for the length functional, and define the two particular networks we treat in what follows.
For the ease of presentation, we first show Theorem \ref{thm:mainB} in Section \ref{sec:ball}, and in Section \ref{sec:sphere} we prove Theorem \ref{thm:mainS}.
An appendix states the Lusternik--Schnirelmann Theorem we employ to bound the number of distinct minimal networks from below.

\section*{Acknowledgement}

The author would like to thank Andrea Malchiodi and Matteo Novaga for suggesting this problem and for insightful discussions.

%----------------------------------------------------------------------------------

\section{Definitions and setup}\label{sec:defs}

Let $M^d$ be a manifold of dimension $d \geq 2$.
A \textit{network} $\G$ on $M$ is a finite set of embedded, regular curves $\g_j: [0,1] \to M$, $j = 1, \ldots, n$ such that the union of their images
\[
\im\G := \bigcup_{j=1}^n \g_j([0,1])
\]
is connected and the curves may intersect only at their endpoints.
We call the curves $\g_j$ the \textit{edges} of the network $\G$, and denote their unit tangents by $\t_j$.
For consistency, we always denote the edges of a network their unit tangents by these Greek letters, where we may drop the subindices to simplify notation whenever this is not a source of confusion.
We also call \textit{junction} a point $p \in \im\G$ such that two or more edges intersect at $p$, and we call \textit{endpoint} of the network a point $q \in \im\G$ such that it is the endpoint of one edge, but it is not a junction.

We consider two types of network, each one consisting of three edges.
The first one is the \textit{triod} on the unit ball $\B^d$ of $\R^d$, which has one junction and three endpoints constrained to be at the boundary $\p\B^d$.
In other words, if we denote its edges by $\g_j: [0,1] \to \overline{\B^d}$, $j=1,2,3$, then, after reparametisation,
\[
\g_1(0) = \g_2(0) = \g_3(0) , \qquad \g_1(1) , \g_2(1) , \g_3(1) \in \p\B^d .
\]
The second one is the $\th$-\textit{network} on the sphere $\S^d$, which has two triple junctions.
In a similar way, if we denote $\g_j: [0,1] \to \S^d$, $j=1,2,3$ its edges, then they satisfy
\[
\g_1(0) = \g_2(0) = \g_3(0) , \qquad \g_1(1) = \g_2(1) = \g_3(1).
\]

When we endow $M$ with a Riemannian metric $g$, we can define the \textit{length} of a network $\G$ to be the sum of the lengths of its edges, that is
\[
L(\G) := \sum_{j=1}^n \int_0^1 |\g_j'(t)|_{g(\g_j(t))} \;dt .
\]
We call a network $\G_0$ on $(M,g)$ \textit{stationary} if it is a critical point for the length functional, that is, for any smooth one-parameter family of variations $\G(\e) = \{ \g_j(\cdot;\e) \}_{j=1}^n$, $|\e| \ll 1$ such that $\G(0) = \G_0$ and which preserve the structure of the image $\im\G$, it holds
\[
\left. \frac{d}{d\e} \right|_{\e=0} L(\G(\e)) = 0 .
\]
The next proposition gives a formula for the first variation of the length of a network.
It is a consequence of the formula for the first variation of length of a single curve, which can be consulted, for example, in \cite{IvanovTuzhilin-MinNets1994}*{Ch. 1, Theorem 3.2}.
\begin{prp}\label{prp:1var}
Let $\G_0 = \{ \g_j \}_{j=1}^n$ be a network on a Riemannian manifold $M$.
Denote $P$ the set of its junctions, and $Q$ the set of its endpoints.
Let $\G(\e)$, $\e \in (-1,1)$, be a smooth variation of $\G_0$, and set the variational fields
\[
X_j := \left. \frac{d}{d\e} \right|_{\e=0} \g_j(\cdot;\e) .
\]
Then the first variation of the length of $\G_0$ is given by
\begin{equation}\label{eq:1var}
\sum_{p \in P} \left\langle \sum_{i=1}^{m_p} \eta_{j_i} , X(p) \right\rangle + \sum_{q \in Q} \langle \eta , X(q) \rangle - \sum_{j=1}^n \int_{\g_j} \langle X_j, \nabla_{\t_j}\t_j \rangle \;ds
\end{equation}
where $\eta_j(t) := (-1)^{t+1}\t_j(t)$, $t \in \{0,1\}$, is the exterior normal to $\g_j$ at each of its endpoints.
\end{prp}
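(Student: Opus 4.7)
The plan is to reduce the computation to the classical first variation formula for a single smooth curve and then reorganise the resulting boundary terms according to the combinatorial structure of the network. The only subtlety is bookkeeping: each endpoint of each edge contributes a boundary term, and these terms must be regrouped according to whether the relevant point is an endpoint of the whole network or a junction shared by several edges.

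First, I would apply the standard first variation of length formula (as stated, for example, in \cite{IvanovTuzhilin-MinNets1994}*{Ch. 1, Theorem 3.2}) to each edge $\g_j$ separately. For a smooth variation $\g_j(\cdot;\e)$ with variational field $X_j$, this reads
\[
\left.\frac{d}{d\e}\right|_{\e=0} \int_0^1 |\g_j'(t;\e)|_g \, dt = \bigl\langle \t_j(1), X_j(1)\bigr\rangle - \bigl\langle \t_j(0), X_j(0)\bigr\rangle - \int_{\g_j} \langle X_j, \nabla_{\t_j}\t_j\rangle \, ds.
\]
Using $\eta_j(0) = -\t_j(0)$ and $\eta_j(1) = \t_j(1)$, the two boundary terms combine into $\langle \eta_j(0), X_j(0)\rangle + \langle \eta_j(1), X_j(1)\rangle$. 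Summing over $j = 1, \ldots, n$ yields
\[
\frac{d}{d\e}\bigg|_{\e=0} L(\G(\e)) = \sum_{j=1}^n \sum_{t \in \{0,1\}} \bigl\langle \eta_j(t), X_j(t)\bigr\rangle - \sum_{j=1}^n \int_{\g_j} \langle X_j, \nabla_{\t_j}\t_j\rangle \, ds.
\]

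Next I would regroup the boundary sum according to the point of $\im\G$ at which each term is evaluated. If an endpoint of $\g_j$ is an endpoint $q$ of the network in the sense defined above, then only $\g_j$ terminates at $q$ and the term $\langle \eta_j, X_j(q)\rangle$ contributes directly to the sum over $Q$, with $X(q) := X_j(q)$ being unambiguous. If instead the endpoint lies at a junction $p$ where the edges $\g_{j_1}, \ldots, \g_{j_{m_p}}$ meet, then the requirement that the variation $\G(\e)$ preserve the structure of $\im\G$ forces $\g_{j_i}(\cdot;\e)$ to share that junction for every $\e$, whence $X_{j_1}(p) = \cdots = X_{j_{m_p}}(p) =: X(p)$. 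Collecting the corresponding boundary contributions at $p$ gives
\[
\sum_{i=1}^{m_p} \langle \eta_{j_i}, X_{j_i}(p)\rangle = \left\langle \sum_{i=1}^{m_p} \eta_{j_i}, X(p)\right\rangle,
\]
and summing over $p \in P$ together with the endpoint contributions produces exactly the expression \eqref{eq:1var}.

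The argument is essentially elementary once the single-curve formula is granted; there is no real obstacle beyond carefully encoding the compatibility condition $X_{j_i}(p) = X(p)$ at the junctions, which is precisely the content of the hypothesis that the variation preserves the structure of $\im\G$.
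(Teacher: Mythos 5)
Your proposal is correct and follows exactly the route the paper indicates: the paper gives no written proof, stating only that the proposition is a consequence of the single-curve first variation formula from \cite{IvanovTuzhilin-MinNets1994}*{Ch.~1, Theorem 3.2}, which is precisely the edge-by-edge application and junction/endpoint regrouping you carry out. Your explicit observation that the structure-preserving hypothesis forces $X_{j_1}(p) = \cdots = X_{j_{m_p}}(p)$ at each junction is the one point the paper leaves implicit, and you handle it correctly.
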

Therefore, a network is stationary if and only if the following three conditions are satisfied:
\begin{itemize}
    \item each edge is a geodesic;
    \item at each junction the sum of the unit tangents to each edge, pointing to the \textit{exterior} of the curve, is zero;
    \item if each endpoint is constrained to the boundary of $M$, then the network is normal to $\p M$ at each endpoint.
\end{itemize}
In particular, whenever the junction is of degree two, the tangents to the two concurring curves are parallel, and the concatenation of such two curves can be parametrised as a single regular curve.
Also, since triple junctions locally minimise length \cite{IvanovTuzhilin-MinNets1994}*{Ch. 3, Theorem 2.1}, we make the following definition.

\begin{defn}[Minimal network]\label{def:min-net}
    A network on a Riemannian manifold $M$ is said to be \textit{minimal} if it is stationary, it only has triple junctions, and it is normal to the boundary of $M$ at each of its endpoints.
\end{defn}

%----------------------------------------------------------------------------------

\section{A reduction method and proof of Theorem \ref{thm:mainB}} \label{sec:ball}

We now proceed to show the existence of minimal triods on the $d$-dimensional unit ball $\B^d \subset \R^d$, $d \geq 2$, endowed with a Riemannian metric $g$ which is sufficiently close in $C^2$ to the standard Euclidean metric $\d_0$.
Consider the family of triods that have the triple junction at the origin and whose edges are straight segments whose tangents form equal angles of $2\pi/3$.
These triods are minimal in the sense of Definition \ref{def:min-net}, and can be parametrised by the \textit{Stiefel manifold} $V_2(\R^d)$ of orthonormal 2-frames in $\R^d$ in the following way: denote a point in $V_2(\R^d)$ by $e = (e_1, e_2)$, and consider the unit vectors
\begin{equation}\label{eq:u0-vecs-B}
    u_1 := e_1, \quad u_2 := -\frac{1}{2}e_1 + \frac{\sqrt{3}}{2}e_2, \quad u_3 := -\frac{1}{2}e_1 - \frac{\sqrt{3}}{2}e_2.
\end{equation}
Then the network is given simply by the curves
\[
\g_j(t) := tu_j , \quad t \in [0,1] , \quad j = 1,2,3 ,
\]
and its length is $3$ independently of the parameter $e \in V_2(\R^d)$.
Of course, such parametrisation is redundant in the sense that many different triples $u$ represent the same geometric object.
However, one can quotient out by the symmetry group of said triples, which is the permutation group $S_3$ (see Remark \ref{rem:S3action} below).
Despite this redundancy, we do not need to keep these symmetries in mind for much of the analysis done.
It is also worth noting that $V_2(\R^d)$ is homeomorphic to the unit tangent bundle of $\S^{d-1}$, and $V_2(\R^2)$ is disconnected.

For now, fix the Euclidean metric $g = \d_0$.
To simplify notation, we will denote by a subscript $0$ any object that in principle depends on the metric when evaluated at the Euclidean one, e.g., $L_0$ for $L_{\d_0}$, etc.

Let $(x,w) \in \B^d \times \S^{d-1}$.
Consider the straight line $\widehat{\g}(t; x,w) := x + tw$ emanating at $x$ with initial velocity $w$, and define $L_0 = L_0(x,w)$ to be the the first time $t$ at which $\widehat{\g}(t) \in \p\B^d$.
We can compute $L_0$ explicitly by solving $|\widehat{\g}(t)|^2 = 1$, which gives us the solution
\begin{equation}\label{eq:L0}
L_0(x,w) = - x \cdot w + \sqrt{1 + (x \cdot w)^2 - |x|^2} = - x \cdot w + \sqrt{1 - |P_w x|^2} ,
\end{equation}
where $P_w x := x - (x \cdot w)w$ denotes the orthogonal projection of $x$ onto the subspace $w^\perp \subset \R^d$.

Now, for each $(x,e) \in \B^d \times V_2(\R^d)$ we define a triod $T_0(x,e) := \{ \g_j(\cdot; x,e) \}_{j=1}^3$, whose junction is $x$ and the three endpoints are in $\p\B^d$, as
\[
\g_j(t;x,e) := x + tL_0(x,u_j)u_j , \quad t \in [0,1] , \quad j = 1,2,3 ,
\]
where the $u_j$ are defined by \eqref{eq:u0-vecs-B}.
We then define the function $F_0: \B^d \times V_2(\R^d) \to \R$ as to be the total length of the triod $T_0$, that is,
\begin{equation}\label{eq:Fg0}
F_0(x,e) := L_0(x,u_1) + L_0(x,u_2) + L_0(x,u_3) , \quad (x,e) \in \B^d \times V_2(\R^d) .
\end{equation}
Note that, as $u_1 + u_2 + u_3 = 0$, from \eqref{eq:L0} we deduce the explicit formula
\begin{equation}\label{eq:Fg00}
F_0(x,e) = \sqrt{1 - |P_1x|^2} + \sqrt{1 - |P_2x|^2} + \sqrt{1 - |P_3x|^2} ,
\end{equation}
where we let $P_j = P_{u_j}$, $j = 1,2,3$.

We now summarise some elementary properties of $F_0$ in the following lemma.

\begin{lem}\label{lem:F0}
The function $F_0$ defined above in \eqref{eq:Fg0} is smooth in $\B^d \times V_2(\R^d)$ and continuous up to $\overline{\B^d} \times V_2(\R^d)$, and satisfies
\[
\max_{(x,e) \in \B^d \times V_2(\R^d)} F_0(x,e) = 3 , \qquad \max_{(x,e) \in \p\B^d \times V_2(\R^d)} F_0(x,e) = 2 .
\]
Moreover, its only interior critical points are $(0,e)$, $e \in V_2(\R^d)$, which attain the maximum value $3$, and the critical submanifold $\{ 0 \} \times V_2(\R^d) \simeq V_2(\R^d)$ is non-degenerate in the sense of Bott, i.e. the Hessian
\[
d^2F_0(0,e)(\x,\x) = 0 \quad \text{if and only if} \quad \x \in T_e V_2(\R^d) \subset T_{(0,e)} \B^d \times V_2(\R^d) .
\]

\begin{proof}
The regularity properties are clear from formula \eqref{eq:Fg00}, and observe that $F_0$ is differentiable on $\overline{\B^d} \times V_2(\R^d)$ except at the points $(x,e)$ such that $x = P_jx$, $j = 1,2,3$.
Now, it is clear that $F_0 \leq 3$ and $F_0(0,e) = 3$ for all $e \in V_2(\R^d)$.
To see that these are the unique interior critical points, for every $x \in \B^d$ and $e \in V_2(\R^d)$ we compute directly
\begin{equation}\label{eq:dF0x}
\left. \frac{d}{ds} \right|_{s=1} F(sx,e) = - \sum_{j=1}^3 \frac{|P_jx|^2}{\sqrt{1 - |P_jx|^2}} .
\end{equation}
We see that the derivative in the above formula is equal to $0$ if and only if for every $j$ we have $P_jx = 0$, and since $\{ u_1, u_2 \}$ is linearly independent, this only occurs when $x=0$.
Thus, $dF_0(x,e) = 0$ if and only if $x=0$.
Note that \eqref{eq:dF0x} also implies that $F_0$ is strictly decreasing along the rays $s \mapsto sx$, $x \neq 0$, and so attains its minimum only at the boundary $\p\B^d \times V_2(\R^d)$.

To see that $V_2(\R^d)$ is non degenerate, first note that as every $(0,e)$ is a maximum point, we have $d^2F_0(0,e) \leq 0$ in the sense of quadratic forms on $T_{(0,e)} \B^d \times V_2(\R^d)$.
Thus, the non-degeneracy condition follows once we see that the partial second derivative $\nabla_x^2 F_0(0,e)$ is negative-definite on $T_0 \B^d \simeq \R^d$ for every $e \in V_2(\R^d)$.
We compute directly for $v \in \R^d$
\[
\nabla_x^2 F_0(0,e) (v,v) = \left. \frac{d^2}{ds^2} \right|_{s=0} F_0(sv,e) = \left. \frac{d}{ds} \right|_{s=0} - \sum_{j=1}^3 \frac{s |P_j v|^2}{\sqrt{1 - s^2|P_j v|^2}} = - \sum_{j=1}^3 |P_j v|^2 ,
\]
which again is strictly negative for every $v \neq 0$.

We now compute $\max_{(x,e) \in \p\B^d \times V_2(\R^d)} F_0(x,e)$.
Note that for $x \in \p\B^d$ formula \eqref{eq:L0} gives us
\[
F_0(x,e) = | x \cdot u_1 | + | x \cdot u_2 | + | x \cdot u_3 | .
\]
We now show that for $x \in \p\B^d$, we have
\[
F_0(x,e) = 2 \max \{ | x \cdot u_1 | , | x \cdot u_2 | , | x \cdot u_3 | \} .
\]
Let us suppose that $\max_{j \in \{1,2,3\}} | x \cdot u_j | = | x \cdot u_1 |$.
As $F_0(x,e) = F_0(-x,e)$, we can assume without loss of generality that $x \cdot u_1 \geq 0$.
Since $ x \cdot u_1 + x \cdot u_2 + x \cdot u_3 = 0$, we must have $x \cdot u_1 \neq 0$, and at least one other term in the sum is negative, say $x \cdot u_2 < 0$.
Therefore $x \cdot u_1 \geq | x \cdot u_2 | =  - x \cdot u_2$ and thus
\[
| x \cdot u_3 | = | x \cdot u_1 + x \cdot u_2 | = x \cdot u_1 + x \cdot u_2 .
\]
Putting all this together, we find
\[
F_0(x,e) = x \cdot u_1 - x \cdot u_2 + | x \cdot u_1 + x \cdot u_2 | = 2 x \cdot u_1 ,
\]
which proves our claim.
It follows that $\max_{(x,e) \in \p\B^d \times V_2(\R^d)} F_0(x,e) = 2$, and it is attained if and only if $x = \pm u_j$ for some $j \in \{ 1, 2, 3 \}$.
\end{proof}
\end{lem}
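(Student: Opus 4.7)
The plan is to base everything on the explicit closed-form expression \eqref{eq:Fg00}, which decomposes $F_0$ into a sum of three simple scalar functions $x \mapsto \sqrt{1 - |P_j x|^2}$. Regularity is then immediate: on $\B^d$ each summand is smooth because $|P_j x|^2 \leq |x|^2 < 1$, while on $\overline{\B^d}$ each summand is the square root of a non-negative quantity, hence at worst continuous. The bound $F_0 \leq 3$ is trivial, and equality requires $P_j x = 0$ for every $j$, already hinting at the interior critical locus.

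To pin down the interior critical points, I would test the differential along radial directions in $x$. Differentiating $s \mapsto F_0(sx,e)$ at $s=1$ yields a sum of non-positive terms $-|P_j x|^2/\sqrt{1-|P_j x|^2}$, whose vanishing forces $P_j x = 0$ for every $j$. Since any two of $u_1, u_2, u_3$ are linearly independent, requiring $x \perp u_j$ for all $j$ forces $x = 0$. This identifies the critical locus as $\{0\} \times V_2(\R^d)$ and its critical value as $3$ in one stroke.

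The step I expect to matter most for the rest of the paper, and therefore the one deserving the most care, is the Bott non-degeneracy. Because $F_0$ attains its maximum at every $(0,e)$, the Hessian is automatically negative semidefinite and vanishes on the factor $T_e V_2(\R^d)$; what remains is to show that the $\R^d$-block $\nabla_x^2 F_0(0,e)$ is strictly negative definite. A direct second-derivative computation along the segment $s \mapsto sv$ yields $\nabla_x^2 F_0(0,e)(v,v) = -\sum_{j=1}^3 |P_j v|^2$, which is strictly negative for $v \neq 0$ by the same linear-independence argument as before. This is the rigidity fact which encodes geometrically that the equiangular condition on the triod leaves no hidden deformation direction in $\B^d$ beyond the rotational ones parametrised by $V_2(\R^d)$.

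Finally, for the boundary maximum, on $\p\B^d$ the identity $|P_j x|^2 = 1 - (x \cdot u_j)^2$ converts $F_0$ into $\sum_j |x \cdot u_j|$. The constraint $\sum_j u_j = 0$ gives $\sum_j x \cdot u_j = 0$, from which a short case analysis on signs shows that the largest of the $|x \cdot u_j|$ equals the sum of the other two; hence $F_0(x,e) = 2 \max_j |x \cdot u_j| \leq 2$, with equality exactly at $x = \pm u_j$ for some $j$.
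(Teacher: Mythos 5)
Your proposal is correct and follows essentially the same route as the paper's own proof: regularity from the closed form \eqref{eq:Fg00}, the radial derivative computation to isolate the critical locus, negative definiteness of the $x$-block of the Hessian for Bott non-degeneracy, and the sign case analysis with $\sum_j x\cdot u_j=0$ for the boundary maximum. No substantive differences to report.
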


The idea now is to slightly perturb the Euclidean metric $\d_0$ on $\B^d$ so as to still have a family of triods parametrised by an open subset containing $\{ 0 \} \times V_2(\R^d)$.
Thus, we first need to show that we can define the analogous quantities for a Riemannian metric which is a perturbation of $\d_0$.

\begin{lem}\label{lem:L}
For any Riemannian metric $g$ on $\B^d$ sufficiently close to $\d_0$ in $C^2$ we have that any unit speed geodesic starting at $x \in B_{1/2}$ in direction $w \in \S^{d-1}$ hits transversely the boundary $\p\B^d$ at a first time $t = L_g(x,w)$.
Moreover, $L_g \in C^\infty(B_{1/2} \times \S^{d-1})$ and the mapping $g \mapsto L_g \in C^2(B_{1/2} \times \S^{d-1})$ is continuous in $C^2$.
In particular, $L_g \to L_0$ in $C^2$ as $g \to \d_0$ in $C^2$.

\begin{proof}
Without loss of generality we can suppose that the metric $g$ is defined in the whole of $\R^d$.
Consider the unit speed geodesic $\widehat{\g}: [0,\infty) \times B_{1/2} \times \S^{d-1} \to \R^d$ given by
\[
\widehat{\g}(t;x,w) := \exp_x(tw/|w|_{g(x)}) ,
\]
and note that it defines a smooth map which depends continuously on the metric $g$.
Define $L_g = L_g(x,w)$ as the first time $t$ such that $\widehat{\g}(t)$ hits $\p\B^d$, i.e.
\[
L_g(x,w) := \inf \{ t>0 : |\widehat{\g}(t;x,w)| \geq 1 \} .
\]
By ODE theory, if $g \to \d_0$ in $C^2$ we have that $\widehat{\g} \to \widehat{\g}_0$ in $C^2([0,T] \times B_{1/2} \times \S^{d-1}; \R^d)$ for every $T > 0$, where $\widehat{\g}_0(t;x,w) := x + tw$ is the straight line geodesic for $\d_0$.
Therefore, if $g$ is close to $\d_0$ in $C^2$, we have that $|\widehat{\g}(t;x,w)| > 1$ for all $(x,w) \in B_{1/2} \times \S^{d-1}$ if $t$ is sufficiently large, and so $L_g$ is a well defined positive number such that $\widehat{\g}(L_g;x,w) \in \p\B^d$ by continuity.
Note also that when $g = \d_0$, we have $L_g = L_0$ which is computed explicitly in \eqref{eq:L0}.

To show that $L_g = L_g(x,w)$ defines a smooth function of $(x,w)$, define
\[
S_g(t;x,w) := \frac{1}{2} (|\widehat{\g}(t;x,w)|^2 - 1)
\]
and note that, by definition, $S_g(L_g(x,w);x,w) = 0$.
By the above argument, $S_g$ belongs to $C^\infty([0,\infty) \times B_{1/2} \times \S^{d-1})$ and $g \mapsto S_g \in C^2([0,T] \times B_{1/2} \times \S^{d-1})$ is continuous in $C^2$ for every $T > 0$.
Also, for $L_0 = L_0(x,w)$ we have
\[
\frac{\p S_0}{\p t} (L_0;x,w) = (x + L_0 w) \cdot w = x \cdot w + L_0 = \sqrt{1 - |P_w x|^2} \geq \frac{\sqrt{3}}{2} ,
\]
so we can conclude by the implicit function theorem.
\end{proof}
\end{lem}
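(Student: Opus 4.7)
The plan is to invoke standard continuous dependence of ODE solutions on parameters, together with the implicit function theorem applied to the auxiliary scalar function $S_g$ suggested by the hint. First, I would extend $g$ to a smooth metric on all of $\R^d$ and consider the unit-speed geodesic flow $\widehat{\g}(t;x,w) := \exp_x(tw/|w|_{g(x)})$. The geodesic ODE has coefficients given by the Christoffel symbols of $g$, which depend on first derivatives of $g$, so classical ODE theory yields that for each fixed $T>0$ the assignment $g \mapsto \widehat{\g}$ is continuous from $C^2$ metrics into $C^2([0,T] \times \overline{B_{1/2}} \times \S^{d-1}; \R^d)$; in particular, for fixed smooth $g$, $\widehat{\g}$ is jointly smooth in $(t,x,w)$.

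To see that $L_g$ is well defined when $g$ is close to $\d_0$, I would use that the Euclidean geodesics satisfy $|\widehat{\g}_0(t;x,w)|^2 = |x|^2 + 2t(x \cdot w) + t^2$, so any $T_0 \geq 2$ forces $|\widehat{\g}_0(T_0;x,w)| > 1$ uniformly on $\overline{B_{1/2}} \times \S^{d-1}$. By the $C^2$-continuity established above, the same strict inequality holds for $\widehat{\g}$ whenever $g$ is sufficiently close to $\d_0$, hence by continuity $L_g(x,w) \in (0,T_0)$ exists and $\widehat{\g}(L_g;x,w) \in \p\B^d$.

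For the smoothness and continuous dependence on $g$, I would apply the implicit function theorem to $S_g(t;x,w) := \frac{1}{2}(|\widehat{\g}(t;x,w)|^2 - 1)$ at the zero $t = L_g(x,w)$. The explicit computation at $g = \d_0$ yields $\p_t S_0(L_0;x,w) = \sqrt{1 - |P_w x|^2} \geq \sqrt{3}/2$, using $|x| \leq 1/2$, and $C^2$-closeness of $g$ to $\d_0$ propagates this lower bound to $\p_t S_g$, which is exactly the transversality of $\widehat{\g}$ to $\p\B^d$ at time $L_g$. The implicit function theorem then produces $L_g \in C^\infty(B_{1/2} \times \S^{d-1})$ for each smooth $g$, while its quantitative form combined with the continuity of $g \mapsto S_g$ in $C^2$ gives continuity of $g \mapsto L_g$ in $C^2$, and in particular $L_g \to L_0$ as $g \to \d_0$. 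The only delicate point, and the reason the hypothesis is phrased in $C^2$ rather than $C^0$ or $C^1$, is tracking that the dependence of the ODE solutions on the metric happens genuinely in $C^2$; this requires the Christoffel symbols, hence the first derivatives of $g$, to be close in $C^1$, i.e. $g$ to be close in $C^2$.
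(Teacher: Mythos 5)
Your proposal is correct and follows essentially the same route as the paper: extend $g$ to $\R^d$, use continuous dependence of the geodesic ODE on the metric in $C^2$ to see that the geodesics exit the ball in bounded time, and then apply the implicit function theorem to $S_g(t;x,w)=\tfrac12(|\widehat{\g}(t;x,w)|^2-1)$ using the uniform lower bound $\p_t S_0(L_0;x,w)=\sqrt{1-|P_wx|^2}\geq\sqrt{3}/2$. The extra details you supply (the explicit formula for $|\widehat{\g}_0(t;x,w)|^2$ giving a uniform exit time, and the remark on why $C^2$-closeness of the metric is the right hypothesis) are consistent with, and slightly more explicit than, the paper's argument.
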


Now, let $g$ be a Riemannian metric on $\B^d$, and for each $x \in \B^d$ identify in the canonical way $T_x \B^d \simeq \R^d$.
Consider the mapping $\psi_g \colon \B^d \times V_2(\R^d) \to \B^d \times (\R^d)^3$ given by
\begin{equation}\label{eq:diffeo-B}
\psi_g(x,e) := \left( x, u_1 , - \frac{1}{2} u_1 + \frac{\sqrt{3}}{2} \n_1 , - \frac{1}{2} u_1 - \frac{\sqrt{3}}{2} \n_1 \right) , 
\end{equation}
where $(u_1, \n_1)$ is the result of applying the Gram--Schmidt process to $(e_1,e_2)$ with respect to $g(x)$.
We also denote $(x,u) = \psi_g(x,e)$, and consider the unit normal $\n_j$ to $u_j$ such that $(u_j, \n_j)$ induces the same orientation as $(e_1, e_2)$ in the plane generated by both vectors, $j = 2,3$.
For example, when $g = \d_0$, $(u_1, \n_1) = (e_1, e_2)$ is just the identity, and we recover formulas \eqref{eq:u0-vecs-B}
It is clear that $\psi_g$ is a diffeomorphism onto its image, and that it is smooth in $g$ as a parameter.

For each $(x,e) \in B_{1/2} \times V_2(\R^d)$ and a metric $g$ sufficiently close to $\d_0$ in $C^2$ we construct a triod as follows.
For every $j \in \{ 1, 2, 3 \}$ consider the unit-speed geodesic $\widehat{\g}_j: [0,\infty) \to \R^d$ given by $\widehat{\g}_j(t;x,e) := \exp_x(tu_j)$, where $(x,u) = \psi_g(x,e)$ as defined in \eqref{eq:diffeo-B}.
By Lemma \ref{lem:L} there exists a first time $t = L_g(x,u_j)$ such that $\widehat{\g}_j(t;x,e)$ hits $\p\B^d$, which is a smooth function of $(x,e)$.
Reparametrise each $\widehat{\g}_j$ to $\g_j: [0,1] \to \B^d$ as $\g_j(t) := \widehat{\g}_j(tL_g(x,u_j))$ and define the triod $T_g(x,e) := \{ \g_j(\cdot;x,e) \}_{j=1}^3$.

We can now define the \textit{reduced length functional} as the total length of the triod $T_g(x,e)$,
\begin{equation}\label{eq:Fg}
F_g(x,e) := L_g(x,u_1) + L_g(x,u_2) + L_g(x,u_3) , \quad (x,e) \in B_{1/2} \times V_2(\R^d) .
\end{equation}
It follows from Lemma \ref{lem:L} that $F_g$ is smooth and that depends continuously on the metric $g$ in the sense that $g \mapsto F_g \in C^2(B_{1/2} \times V_2(\R^d))$ is continuous in $C^2$.

Let $\x$ be a tangent vector to $B_{1/2} \times V_2(\R^d)$ at $(x,e)$.
Using formula \eqref{eq:1var} for the first variation of the length of a network, we see that
\begin{equation}\label{eq:dF}
dF_g(x,e) \cdot \x = \langle dq_1(x,e) \cdot \x, \t_1 \rangle + \langle dq_2(x,e) \cdot \x, \t_2 \rangle + \langle dq_3(x,e) \cdot \x, \t_3 \rangle ,
\end{equation}
where we denote $q_j(x,e) := \g_j(1;x,e)$.
We also note that $dq_j(x,e) \cdot \x$ is tangent to $\p\B^d$ at $q_j$ for $j = 1,2,3$.
Therefore, if $(x,e)$ is a critical point of $F_g$, from \eqref{eq:dF} we see that the orthogonality of $\t_j$ to $\p\B^d$ at $q_j$ follows after the next lemma.

\begin{lem}\label{lem:linearinv}
Let $(x,e) \in B_{1/2} \times V_2(\R^d)$.
Then if $g$ is sufficiently close to $\d_0$ in $C^2$ the linear mapping
\[
\x \in T_{(x,e)} B_{1/2} \times V_2(\R^d) \mapsto (dq_j(x,e) \cdot \x)_{j=1}^3 \in \bigoplus_{j=1}^3 T_{q_j}\p\B^d
\]
is invertible.

\begin{proof}
Write $d\psi_g(x,e) \cdot \x = (v,\eta)$, where $v \in \R^d$ and $\eta \in T_e V_2(\R^d)$.
As $g \to \d_0$ in $C^2$, by the properties of $\psi_g$, we have $\psi_g(x,e) \to \psi_0(x,e)$ and $d\psi_g(x,e) \cdot \x \to d\psi_0(x,e) \cdot \x$, and thus, by Lemma \ref{lem:L}, also $L(x,u_j) \to L_0(x,u_j)$.
Since the $q_j$ depend continuously in the metric, by approximation it is sufficient to show the lemma for $g = \d_0$.
Recall that in the Euclidean case
\[
q_j(x,e) = x + L_0(x,u_j) u_j , \quad j = 1,2,3 .
\]
To show the invertibility, we prove that the kernel of the mapping is trivial.
For this, let $\x$ be a tangent vector and suppose that
\begin{equation}\label{eq:dq=0}
dq_j(x,e) \cdot \x = v + (dL(x,u_j)\cdot(v,\eta_j)) u_j + L_0(x,u_j) \eta_j = 0 , \quad j = 1,2,3 .
\end{equation}
Let $\Pi(u)$ denote the $2$-dimensional subspace of $\R^d$ generated by $\{u_1, u_2, u_3 \}$.
We may write $v = p + q$ and $\eta_j = c_j \n_j + \z_j$ for some $c_j \in \R$ and $p \in \Pi(u)$, $q, \z_j \in \Pi(u)^\perp$.
Then equation \eqref{eq:dq=0} reads
\begin{equation}\label{eq:pq}
p = (v \cdot u_j)u_j - c_j L_0(x,u_j) \n_j ,\qquad q = - L_0(x,u_j) \z_j .
\end{equation}
In particular, the $\z_j$ are parallel vectors.
Since $u_1 + u_2 + u_3 = 0$, we also have $\eta_1 + \eta_2 + \eta_3 = 0$.
Hence, if we project onto the subspace $\Pi(u)^\perp$ we find that
\[
\z_1 + \z_2 + \z_3 = - \left( \frac{1}{L_0(x,u_1)} + \frac{1}{L_0(x,u_2)} + \frac{1}{L_0(x,u_3)} \right) q = 0 .
\]
This implies that $q=0$ and $\z_j = 0$ for every $j$.
Now, we also have that $c_1\n_2 + c_2\n_2 + c_3\n_3 = 0$, and since $\n_1 + \n_2 + \n_3 = 0$ we find that
\[
(c_1 - c_3)\n_1 + (c_2 - c_3)\n_2 = 0 .
\]
Thus by the linear independence of $\{ \n_1, \n_2\}$, we have $c_1 = c_2 = c_3$.
If we denote this common value by $c$ and plug it into the first equation in \eqref{eq:pq} we see that
\[
-cF_0(x,e) = p \cdot \n_1 + p \cdot \n_2 + p \cdot \n_3 = 0 ,
\]
and thus also $c = 0$.
This means that $\eta = 0$ and $v = p = (v \cdot u_j) u_j$ for every $j$, which in turn implies $v=0$ as $\{ u_1, u_2 \}$ is linearly independent.
Therefore, $\x = 0$ as we were to prove.
\end{proof}
\end{lem}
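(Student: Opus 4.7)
The strategy is to first count dimensions, then reduce to the Euclidean case by a perturbation argument, and finally carry out a direct linear-algebra analysis that exploits the equiangular symmetry of the configuration.

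A dimension count gives $\dim(B_{1/2} \times V_2(\R^d)) = d + (2d-3) = 3d-3 = \sum_{j=1}^3 \dim T_{q_j}\p\B^d$, so it suffices to prove injectivity. Since the linear map $\x \mapsto (dq_j(x,e) \cdot \x)_{j=1}^3$ depends continuously on $g$ in the $C^2$ topology (by the continuity of $\psi_g$ in $g$ and by Lemma \ref{lem:L}), and invertibility is an open condition on linear maps, I may restrict to the case $g = \d_0$.

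In the Euclidean case $q_j(x, e) = x + L_0(x, u_j) u_j$, so
\[
dq_j(x, e) \cdot \x = v + a_j u_j + L_0(x, u_j) \eta_j ,
\]
where $v \in \R^d$ is the $\B^d$-component of $d\psi_0 \cdot \x$, the $\eta_j = du_j \cdot \x$ are the infinitesimal variations of the $u_j$ (so that $\eta_j \perp u_j$), and $a_j = dL_0(x, u_j) \cdot (v, \eta_j)$. Pairing the equation $dq_j \cdot \x = 0$ with $u_j$ immediately gives $a_j = -v \cdot u_j$. The key structural fact is that the three vectors $u_1, u_2, u_3$ lie in a common $2$-plane $\Pi \subset \R^d$, satisfy $u_1 + u_2 + u_3 = 0$, and the same holds for their in-plane $\pi/2$-rotations $\n_j$. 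Accordingly, I would decompose $\R^d = \Pi \oplus \Pi^\perp$, write $v = p + q$ and $\eta_j = c_j \n_j + \z_j$ with $\z_j \in \Pi^\perp$, and project the three vector equations. The $\Pi^\perp$ projection reads $q + L_0(x, u_j)\z_j = 0$; combined with $\sum_j \z_j = 0$—inherited from $\sum_j \eta_j = 0$, which in turn follows from $\sum_j u_j = 0$—this forces $q = 0$ and each $\z_j = 0$. In $\Pi$, the identity $\sum_j c_j \n_j = 0$ together with $\sum_j \n_j = 0$ and linear independence of $\{\n_1, \n_2\}$ yields $c_1 = c_2 = c_3$; pairing the in-plane equations with $\n_j$ and summing then forces this common value to vanish via $F_0(x, e) > 0$. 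At this point $\eta = 0$, and $p = a_j u_j$ for $j = 1, 2$ with independent $u_j$ forces $p = 0$ and all $a_j = 0$, giving $\x = 0$.

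The main obstacle I foresee is organisational rather than analytical: setting up the $\Pi$–$\Pi^\perp$ decomposition so that the three coupled vector equations decouple cleanly, and systematically exploiting the equiangular identities $\sum_j u_j = 0$ and $\sum_j \n_j = 0$ at each step. Once the decomposition is in place, the argument reduces to elementary linear algebra, with no further Riemannian input needed beyond the continuity statement already supplied by Lemma \ref{lem:L}.
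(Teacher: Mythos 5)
Your proposal is correct and follows essentially the same route as the paper: reduction to the Euclidean case by continuity, the decomposition $\R^d = \Pi \oplus \Pi^\perp$ with $v = p+q$ and $\eta_j = c_j\n_j + \z_j$, and the repeated use of $\sum_j u_j = \sum_j \n_j = \sum_j \eta_j = 0$ to force each component to vanish. The only differences are cosmetic: you make the dimension count and the openness of invertibility explicit (the paper leaves these implicit), and you have a harmless sign slip in writing $p = a_j u_j$ rather than $p = -a_j u_j$.
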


We can therefore conclude that the functional $F_g$ defined in \eqref{eq:Fg} is a reduced functional for the length functional in the space of triods in the sense of the following proposition.

\begin{prp}\label{prp:reduced-Fg}
Every interior critical point of $F_g$ determines a minimal triod in the sense of Definition \ref{def:min-net}.

\begin{proof}
Let $(x,e) \in B_{1/2} \times V_2(\R^d)$ be a critical point of $F_g$, and let $X \in T_{q_1} \p\B^d$ be any tangent vector.
Then by Lemma \ref{lem:linearinv} and formula \eqref{eq:dF}  we can find a tangent vector $\x \in T_{(x,e)} B_{1/2} \times V_2(\R^d)$ such that
\[
dF_g(x,e) \cdot \x = \langle X , \t_1 \rangle = 0 .
\]
By the arbitrariness of $X$ we conclude that each $\t_1$ is orthogonal to $\p\B^d$ with respect to $g$.
The same can be done for $j = 2,3$, and thus the triod $T_g(x,e)$ is minimal.
\end{proof}
\end{prp}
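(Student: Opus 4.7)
The plan is to verify that a critical point $(x,e)$ of $F_g$ produces a triod $T_g(x,e)$ meeting all three stationarity conditions arising from Proposition \ref{prp:1var}, and hence minimal by Definition \ref{def:min-net}. Two of those conditions are built into the construction: each edge $\g_j$ is, by definition, a $g$-unit-speed geodesic (just reparametrised to live on $[0,1]$), and the tangents $u_1,u_2,u_3$ at the junction $x$ produced by $\psi_g$ in \eqref{eq:diffeo-B} are $g$-unit vectors forming equal angles of $2\pi/3$, so in particular $u_1+u_2+u_3=0$ in $g(x)$. This handles both the interior geodesic equation and the equiangular condition at the triple junction automatically, regardless of whether $(x,e)$ is critical.

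What remains is the orthogonality of each $\t_j$ to $\p\B^d$ at $q_j = \g_j(1;x,e)$. Here I would exploit formula \eqref{eq:dF}, which, because the first two stationarity conditions already hold, expresses the first variation as the boundary pairing
\[
dF_g(x,e) \cdot \x = \sum_{j=1}^{3} \langle dq_j(x,e) \cdot \x , \t_j \rangle , \qquad \x \in T_{(x,e)}(B_{1/2} \times V_2(\R^d)) ,
\]
with each $dq_j \cdot \x$ lying in $T_{q_j}\p\B^d$. To isolate the contribution of a single endpoint, I would invoke Lemma \ref{lem:linearinv}: given any prescribed $X \in T_{q_1}\p\B^d$, there exists a $\x$ for which $dq_1\cdot\x = X$ while $dq_2\cdot\x = dq_3\cdot\x = 0$.

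Testing the critical point identity $dF_g(x,e) = 0$ against such a $\x$ then yields $\langle X, \t_1 \rangle = 0$; by the arbitrariness of $X$, this gives $\t_1 \perp T_{q_1}\p\B^d$ with respect to $g$, and running the same argument for $j=2,3$ finishes the proof. I do not foresee any real obstacle beyond checking that \eqref{eq:dF} has no interior or junction contribution — this is exactly what the equiangularity of the $u_j$ at $x$ and the geodesic nature of the edges ensure — and the surjectivity provided by Lemma \ref{lem:linearinv}, which is where all the work actually sits.
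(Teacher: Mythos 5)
Your proposal is correct and follows essentially the same route as the paper: both arguments note that the geodesic and balancing conditions hold by construction, and then use the invertibility from Lemma \ref{lem:linearinv} together with formula \eqref{eq:dF} to choose a variation $\x$ isolating the endpoint $q_j$, whence criticality forces $\langle X,\t_j\rangle=0$ for all $X\in T_{q_j}\p\B^d$. The only difference is that you spell out the choice $dq_1\cdot\x=X$, $dq_2\cdot\x=dq_3\cdot\x=0$, which is exactly what the paper's more compressed phrasing intends.
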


From Proposition \ref{prp:reduced-Fg} and Lemma \ref{lem:F0} it then follows that there exists at least one critical triod, which corresponds to a maximum point for $F_g$, since for $g$ close to $\d_0$ in $C^2$ we have
\[
\max_{(x,e) \in \p B_{1/2} \times V_2(\R^d)} F_g(x,e) < \max_{(x,e) \in B_{1/2} \times V_2(\R^d)} F_g(x,e) .
\]
This can be improved, and we make a further reduction of the variables to obtain a multiplicity result.

\begin{lem}\label{lem:Lyapunov-Fg}
For each $g$ sufficiently close to $\d_0$ in $C^2$ and $e \in V_2(\R^d)$ there exists $\e > 0$ and a unique $x = x_g(e) \in B_\e$ such that the partial derivative
\[
\nabla_x F_g(x_g(e),e) = 0 .
\]
Moreover, we have that $\| x_g \|_{C^1(V_2(\R^d))} \to 0$ as $g \to \d_0$ in $C^2$.

\begin{proof}
Define the function
\[
H_g(x,e) := \nabla_x F_g(x,e) , \quad (x,e) \in B_{1/2} \times V_2(\R^d) .
\]
As a consequence of Lemma \ref{lem:L} we see that $g \mapsto H_g \in C^1(B_{1/2} \times V_2(\R^d); \R^d)$ is continuous in $C^2$, and because of Lemma \ref{lem:F0}, we have that the partial derivative
\[
\nabla_x H_0(0,e) = \nabla_x^2 F_0(0,e)
\]
is an isomorphism for each $e \in V_2(\R^d)$.
Hence, we can apply once again the implicit function theorem to find a small enough $\e > 0$ and a unique $x = x_g(e) \in B_\e$ such that
\[
H_g(x_g(e),e) = \nabla_x F_g(x_g(e),e) = 0
\]
for every $e \in V_2(\R^d)$ and $g$ in a $C^2$-neighbourhood of $\d_0$.
\end{proof}
\end{lem}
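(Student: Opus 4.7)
The strategy is a parametric implicit function theorem applied to the vector-valued equation $\nabla_x F_g(x,e) = 0$, with $e$ and $g$ treated as parameters and $x$ as the unknown. The key input is that $\nabla_x^2 F_0(0,e)$ is an isomorphism at every $e \in V_2(\R^d)$, which was already established in Lemma \ref{lem:F0}, combined with the $C^2$-continuous dependence $g \mapsto F_g$ supplied by Lemma \ref{lem:L}.

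First, I would introduce the auxiliary map $H_g(x,e) := \nabla_x F_g(x,e)$, defined on $B_{1/2} \times V_2(\R^d)$ with values in $\R^d$. From the $C^2$-continuity of $g \mapsto F_g$ noted after \eqref{eq:Fg}, one has that $g \mapsto H_g$ is continuous from the space of metrics into $C^1(B_{1/2} \times V_2(\R^d); \R^d)$. At the Euclidean base point, Lemma \ref{lem:F0} gives $H_0(0,e) = 0$ for every $e$, and $\partial_x H_0(0,e) = \nabla_x^2 F_0(0,e)$ is negative-definite, hence invertible.

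Next, I would apply the implicit function theorem locally in $e$. Fix $e_0 \in V_2(\R^d)$: the standard version with parameters produces an open neighbourhood $U_{e_0} \subset V_2(\R^d)$ of $e_0$, a $C^2$-neighbourhood $\mathcal{U}_{e_0}$ of $\d_0$, and a radius $\e_{e_0} > 0$ such that for each $(g,e) \in \mathcal{U}_{e_0} \times U_{e_0}$ the equation $H_g(x,e) = 0$ admits a unique solution $x = x_g(e) \in B_{\e_{e_0}}$, and this solution depends in a $C^1$ fashion on $(g,e)$ in the appropriate functional topologies. Since $V_2(\R^d)$ is compact, I can cover it by finitely many such $U_{e_1}, \ldots, U_{e_N}$; setting $\e := \min_i \e_{e_i}$ and $\mathcal{U} := \bigcap_i \mathcal{U}_{e_i}$ produces a uniform radius and a uniform $C^2$-neighbourhood. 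The local uniqueness statements agree on overlaps (taking $\e$ smaller if needed), so they glue to a single map $x_g : V_2(\R^d) \to B_\e$.

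Finally, the bound $\|x_g\|_{C^1(V_2(\R^d))} \to 0$ follows from the continuous dependence in the implicit function theorem together with the fact that $x_{\d_0}(e) \equiv 0$: for each $e$, both $x_g(e)$ and its differential $d_e x_g(e)$ (expressible via $-(\partial_x H_g)^{-1} \partial_e H_g$ evaluated along the solution) converge to their values at $g = \d_0$, which are $0$ and $0$ respectively. Compactness of $V_2(\R^d)$ upgrades this pointwise convergence to the uniform $C^1$-statement. I do not foresee any substantive obstacle here; the only subtlety is checking that the local applications of the implicit function theorem can be made uniform in $e$, which is handled cleanly by compactness of the Stiefel manifold.
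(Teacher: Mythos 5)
Your proposal is correct and follows essentially the same route as the paper: introduce $H_g := \nabla_x F_g$, invoke the continuity of $g \mapsto F_g$ from Lemma \ref{lem:L} and the invertibility of $\nabla_x^2 F_0(0,e)$ from Lemma \ref{lem:F0}, and apply the parametric implicit function theorem. The only difference is that you spell out the compactness-of-$V_2(\R^d)$ argument needed to make $\e$ and the $C^2$-neighbourhood uniform in $e$, which the paper leaves implicit.
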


Since the manifold $V_2(\R^d)$ is compact, after we reduce variables we can find at least two critical points, corresponding to maximum and minimum values of $F_g(x_g(e),e)$.
In order to give a more meaningful bound, we need to employ the \textit{Lusternik--Schnirelmann category} of $V_2(\R^d)$ (see Definition \ref{def:LS} in the Appendix).

\begin{prp}\label{prp:LS-B}
The Lusternik--Schnirelmann category of $V_2(\R^d)$ is equal to $4$, if $d = 2,3$, and equal to $3$ if $d \geq 4$.

\begin{proof}
If $d = 2$, we have that $V_2(\R^2)$ is homeomorphic to the disjoint union of two circles, hence $\cat(V_2(\R^2)) = 4$.
If $d = 3$, then $\cat(V_2(\R^3)) \geq 4$ (\cite{Nishimoto-LScatStiefel2007}*{Remark 1.5}), and since $\dim(V_2(\R^3)) = 3$ we have in fact an equality.
For dimensions $d \geq 4$, see \cite{Nishimoto-LScatStiefel2007}*{Theorem 1.3}.
\end{proof}
\end{prp}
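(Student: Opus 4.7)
My plan is to split the computation into the three dimension regimes $d=2$, $d=3$, and $d \geq 4$, since the topology of $V_2(\R^d)$ changes qualitatively at each of these thresholds (disconnected, connected but not simply connected, simply connected).

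In the case $d=2$, I would use that $V_2(\R^2) \cong O(2)$ decomposes as two disjoint copies of $\S^1$, corresponding to $\det = \pm 1$. Any open subset of $V_2(\R^2)$ contractible in $V_2(\R^2)$ must be contained in a single connected component, so $\cat$ is additive across the disjoint union. Combined with the classical fact $\cat(\S^1)=2$ (two open arcs are needed, and $\S^1$ is not itself contractible), this yields $\cat(V_2(\R^2)) = 2+2 = 4$.

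In the case $d=3$, I would invoke the standard identification $V_2(\R^3) \cong SO(3) \cong \RP^3$, obtained by completing an orthonormal $2$-frame to a positively oriented orthonormal $3$-frame via the cross product. The lower bound then comes from the cup-length estimate: since $H^*(\RP^3;\mathbf{Z}/2) \cong \mathbf{Z}/2[x]/(x^4)$ and $x^3 \neq 0$, the cup-length is $3$ and hence $\cat(\RP^3) \geq 4$. The matching upper bound follows from the general dimension inequality $\cat(X) \leq \dim X + 1$ for path-connected CW complexes, applied to $\dim V_2(\R^3) = 3$.

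For $d \geq 4$, I would appeal to the computation of Nishimoto. The lower bound $\cat(V_2(\R^d)) \geq 3$ should come from a cup-length computation via the Gysin sequence of the sphere bundle $\S^{d-2} \to V_2(\R^d) \to \S^{d-1}$, producing two positive-degree cohomology classes whose cup product survives, with coefficient ring chosen according to the parity of $d$. The upper bound $\cat(V_2(\R^d)) \leq 3$ is the substantive content and requires an explicit construction of a cover of $V_2(\R^d)$ by three open sets each contractible in the ambient manifold; this is the main obstacle I would face if I wished to give a self-contained proof rather than cite \cite{Nishimoto-LScatStiefel2007}, and it is the step where the argument cannot be reduced to a dimension or cohomological estimate alone.
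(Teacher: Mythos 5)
Your proposal is correct and follows essentially the same route as the paper: split into $d=2$ (two disjoint circles, additivity of $\cat$ over components), $d=3$ (lower bound plus the dimension estimate $\cat \leq \dim + 1$), and $d \geq 4$ (cite Nishimoto). The only difference is that for $d=3$ you derive the lower bound $\cat(V_2(\R^3)) \geq 4$ yourself from the identification $V_2(\R^3) \cong SO(3) \cong \RP^3$ and the mod $2$ cup-length, whereas the paper simply cites \cite{Nishimoto-LScatStiefel2007}*{Remark 1.5}; your version is self-contained and equally valid.
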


We can now prove our second main theorem.

\begin{proof}[Proof of Theorem \ref{thm:mainB}]
From Lemma \ref{lem:L} and Lemma \ref{lem:Lyapunov-Fg}, for $e \in V_2(\R^d)$ and $g$ in a $C^2$-neighbourhood of $\d_0$ we can define the function $\F_g(e) := F_g(x_g(e),e)$.
By the chain rule
\[
d\F_g(e) = \nabla_e F_g(x_g(e),e) ,
\]
and hence, if $e \in V_2(\R^d)$ is a critical point of $\F_g$ then $(x_g(e),e) \in B_{1/2} \times V_2(\R^d)$ is a critical point of $F_g$, which by Proposition \ref{prp:reduced-Fg} represents a minimal triod.
Therefore, when $d \geq 3$, by the Lusternik-Schnirelmann Theorem \ref{thm:LS} and Proposition \ref{prp:LS-B} we conclude that for every $g$ in a sufficiently small $C^2$-neighbourhood of $\d_0$ either there are at least $\cat(V_2(\R^d))$ distinct critical levels for $F_g$, or else the number of critical points of $F_g$ is infinite.
In either case, we can find $\cat(V_2(\R^d))$ geometrically different minimal triods. 
When $d = 2$, since $V_2(\R^2)$ has two connected componets, we can conclude that there are at least $\cat(V_2(\R^2))/2 = 2$ geometrically different minimal triods.
\end{proof}

\begin{rem}\label{rem:S3action}
In the proof above we are not considering the symmetries present in the problem.
To account for this, one should pass to the quotient bundle
\[
\O_g := \B^d \times V_2(\R^d) / S_3
\]
where the action of $S_3$ is given fibre-wise by the permutation
\begin{equation}\label{eq:S3action}
\s (x,e) := \psi_g^{-1} \left(x, u_{\s(1)}, u_{\s(2)}, u_{\s(3)} \right), \quad \s \in S_3, \quad (x,u) = \psi_g(x,e) .
\end{equation}
Then the reduced length functional $F_g$ is invariant by this action.
It can be seen that the manifold $\O_g \simeq \B^d \times (V_2(\R^d)/S_3)$ is connected in every dimension $d \geq 2$, and independent of the metric $g$.
It can also be proved that $\cat(V_2(\R^d)/S_3) \geq \cat(V_2(\R^d))$ in dimensions $d \geq 3$, as the preimage of every contractible set in $V_2(\R^d)/S_3$ under the natural projection is also contractible in $V_2(\R^d)$.
Thus, the exact value of $\cat(V_2(\R^d)/S_3)$ would give a sharper bound on the number of geometrically distinct minimal triods in dimensions $d \geq 3$.
In dimension $d=2$, we note that with this action $V_2(\R^2)/S_3$ is diffeomorphic to $\S^1$.
\end{rem}

%-----------------------------------------------------------------------

\section{Proof of Theorem \ref{thm:mainS}} \label{sec:sphere}

Next we show the existence of minimal $\th$-networks on the $d$-dimensional sphere $\S^d \subset \R^{d+1}$, $d \geq 2$, endowed with a Riemannian metric $g$ which is sufficiently close to the standard round metric $g_0$.
The overall idea of the proof follows closely the exposition in Section \ref{sec:ball}.

Let us consider the family of minimal $\th$-networks on $(\S^d,g_0)$ which have antipodal junctions $x,-x \in \S^d$ and edges determined by three directions forming equal $2\pi/3$ angles in $T_x \S^d$.
In a similar fashion to the case of triods, this family can be parametrised by the Stiefel manifold $V_3(\R^{d+1})$ of orthonormal 3-frames in $\R^{d+1}$ in the following way: consider $\S^d$ isometrically embedded in $\R^{d+1}$ as usual, and denote a point in $V_3(\R^{d+1})$ by $(x,e_1,e_2)$, note that $e_1, e_2 \in T_x\S^d$, and consider the three half great circles joining $x$ with $-x$ which have unit tangent vectors $u = (u_1, u_2, u_3)$ at $x$ given as in \eqref{eq:u0-vecs-B}.
Analytically, this network is given by the curves
\begin{equation}\label{eq:curves-th-g0}
\g_j(t) := (\cos \pi t)x + (\sin \pi t)u_j , \quad t \in [0,1] , \quad j = 1,2,3 .
\end{equation}
Its length is $3\pi$ independently of the parameter $(x,e) \in V_3(\R^{d+1})$.

We then perturb this family as explained next.
Let $g$ be another Riemannian metric on $\S^d$.
Fix $(x,e) \in V_3(\R^{d+1})$, and let $y \in \S^d$ be different from $x$.
We wish to join $x$ and $y$ by three curves $\g_j$ which have \textit{constant curvature} in the following sense: if $k = \nabla_\t \t$ is the curvature vector along $\g$ with respect to the Levi-Civita connection $\nabla$ induced by the metric $g$, then we require $|k|_g$ to be constant.
This can be codified by the Frenet--Serret equations
\begin{equation}\label{eq:Frenet-Serret}
    \left\{
    \begin{alignedat}{2}
        \nabla_\t \t &= k , \\
        \nabla_\t k &= -|k|^2 \t .
    \end{alignedat}
    \right.
\end{equation}
The next lemma shows that we can do this if $g$ is sufficiently close to $g_0$.
\begin{lem}\label{lem:curvature-length}
    Let $(x,v) \in T\S^d$, with $v \neq 0$.
    There exists $\e > 0$ such that for every $y \in B_\e(-x)$ and every metric $g$ sufficiently close to $g_0$ in $C^2$, there is a unique curvature vector $k_g = k_g(x,v;y) \in T_x \S^d$, a unique length $L_g(x,v;y)$ and a unique curve $\g$ parametrised by arclength which solves Frenet--Serret equations \eqref{eq:Frenet-Serret} with initial conditions
    \[
        \g(0) = x, \quad \g'(0) = v/|v|_g , \quad k(0) = k_g ,
    \]
    and such that $\g(L_g(x,v;y)) = y$.
    Moreover, $k_g$ and $L_g$ are smooth and depend continuously in $g$.
    In particular, $k_g(x,v;-x) \to 0$, $L_g(x,v;-x) \to \pi$ in $C^2$ as $g \to g_0$ in $C^2$.

    \begin{proof}
        By ODE theory, solutions to \eqref{eq:Frenet-Serret} depend continuously on the metric $g$, so it is sufficient to prove the statement for $g = g_0$.
        Consider then $\S^d$ isometrically embedded in $\R^{d+1}$ as usual, and let $\g$ be a curve in $\S^d$ parametrised by arclength $s \in \R$.
        Then, equation \eqref{eq:Frenet-Serret} can be written as a second order system in $\R^{d+1}$ as
        \begin{equation}\label{eq:Frenet-Serret-S0}
            \left\{
            \begin{alignedat}{2}
                \g''(s) + \g(s) &= k(s) , \\
                k'(s) &= -|k|^2 \g'(s) .
            \end{alignedat}
            \right.
        \end{equation}
        Fix $(x,v) \in T\S^d$ such that $|v| = 1$, and let $\g(s;k_0)$ denote the solution to the system \eqref{eq:Frenet-Serret-S0} with initial conditions
        \begin{equation}\label{eq:Frenet-Serret-S0-indata}
            \g(0) = x , \quad \g'(0) = v , \quad k(0) = k_0 ,
        \end{equation}
        where we are assuming $k_0 \in T_x\S^d$, and $\langle v , k_0 \rangle = 0$.
        Also, note that $\g(\cdot;0)$ is a geodesic, and thus it satisfies
        \begin{equation}\label{eq:geo-data}
            \g(\pi;0) = -x , \quad \g'(\pi;0) = -v
        \end{equation}
        If we consider $\g$ as a function $T_x\S^d \to \S^d$ by parametrising $T_x\S^d$ as $(s,k_0) \mapsto sv + k_0$, then we need to show that the differential $d\g(s;k_0)$ at $(s;k_0) = (\pi;0)$ is an isomorphism $T_x\S^d \to T_{-x}\S^d$.
        For this purpose, let $\x \in T_x\S^d$ be an arbitrary tangent vector.
        If we call $X(s) := \nabla_{k_0} \g(s;0) \cdot \x$, then
        \begin{equation}\label{eq:Frenet-Serret-dg}
            d\g(\pi;0) = X(\pi) + \langle \x , v \rangle \g'(\pi;0) .
        \end{equation}
        To compute $X$, we differentiate \eqref{eq:Frenet-Serret-S0} to see
        \[
            \left\{
            \begin{alignedat}{2}
                X'' + X &= \nabla_{k_0} k , \\
                (\nabla_{k_0}k)' &= 0 .
            \end{alignedat}
            \right.
        \]
        To obtain a solution, we differentiate the initial conditions \eqref{eq:Frenet-Serret-S0-indata}, so
        \[
        X(0) = 0 , \quad X'(0) = 0 , \quad (\nabla_{k_0}k)(0) = \x - \langle \x , v \rangle v .
        \]
        This implies $X(s) = (1-\cos s)(\x - \langle \x , v \rangle v)$, and so $X(\pi) = 2(\x - \langle \x , v \rangle v)$.
        After we plug this, together with \eqref{eq:geo-data}, into \eqref{eq:Frenet-Serret-dg}, we see
        \[
        d\g(\pi;0) \cdot \x = 2\x - 3\langle \x , v \rangle v ,
        \]
        which is indeed an isomorphism.

        Therefore, the lemma follows after we apply the implicit function theorem.
    \end{proof}
\end{lem}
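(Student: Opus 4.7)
The plan is to reduce to the case $g = g_0$ via continuous dependence of solutions to \eqref{eq:Frenet-Serret} on the metric, and then to apply the implicit function theorem to a suitable endpoint map. Observe that $k$ is automatically orthogonal to $\t$, so admissible initial curvature vectors $k_0 = k(0)$ live in the $(d-1)$-dimensional subspace $v^\perp \cap T_x\S^d$; together with the arclength parameter $s \in \R$ this provides exactly $d$ degrees of freedom, matching $\dim \S^d = d$ degrees in the target.

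For $g = g_0$, I would embed $\S^d \hookrightarrow \R^{d+1}$ isometrically and rewrite \eqref{eq:Frenet-Serret} as the second-order system in $\R^{d+1}$
\begin{equation*}
\g'' + \g = k, \qquad k' = -|k|^2 \g',
\end{equation*}
with initial data $\g(0) = x$, $\g'(0) = v/|v|$, $k(0) = k_0$; normalising $|v| = 1$ and denoting the solution by $\g(s;k_0)$, the choice $k_0 = 0$ recovers the great circle $\g(s;0) = (\cos s)x + (\sin s)v$, so that $\g(\pi;0) = -x$ and $\g'(\pi;0) = -v$. I then view $(s, k_0) \mapsto \g(s; k_0)$ as a smooth map from an open neighbourhood of $(\pi, 0)$ in $\R \times (v^\perp \cap T_x\S^d) \cong \R^d$ into $\S^d$, and aim to show that its differential at $(\pi, 0)$ is an isomorphism onto $T_{-x}\S^d$.

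The main obstacle is this invertibility, which boils down to a Jacobi-type calculation along the half great circle. Differentiating the system in $k_0$ at $k_0 = 0$ in a direction $\x \in v^\perp \cap T_x\S^d$ and setting $X(s) := \p_{k_0} \g(s;0) \cdot \x$, $K(s) := \p_{k_0} k(s;0) \cdot \x$, I obtain the linear system $X'' + X = K$ with $K' = 0$ and initial data $X(0) = X'(0) = 0$, $K(0) = \x$. Hence $K \equiv \x$ and $X(s) = (1-\cos s)\x$, giving $X(\pi) = 2\x$; together with the $s$-derivative $-v$, the differential at $(\pi,0)$ sends a basis of $\R \oplus (v^\perp \cap T_x\S^d)$ to a basis of $T_{-x}\S^d$ and is therefore an isomorphism. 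The implicit function theorem applied to the map $(s, k_0; y, g) \mapsto \g(s; k_0, g) - y$ then yields unique smooth functions $L_g(x,v;y)$ and $k_g(x,v;y)$ for $y$ close to $-x$ and $g$ close to $g_0$ in $C^2$; the $C^2$ continuity in $g$ follows from continuous dependence of the ODE on its coefficients, and in particular $L_g(x,v;-x) \to \pi$ and $k_g(x,v;-x) \to 0$ as $g \to g_0$ in $C^2$.
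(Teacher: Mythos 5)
Your proposal is correct and follows essentially the same route as the paper: reduce to $g=g_0$ by continuous dependence, embed $\S^d$ in $\R^{d+1}$, linearise the Frenet--Serret system along the half great circle to get $X(s)=(1-\cos s)\x$, conclude that the differential of the endpoint map at $(\pi,0)$ is an isomorphism, and finish with the implicit function theorem. The only cosmetic difference is that you keep the arclength direction and the $k_0$-directions in $v^\perp\cap T_x\S^d$ separate, whereas the paper packages both into a single parametrisation $(s,k_0)\mapsto sv+k_0$ of $T_x\S^d$ and records the differential as $\x\mapsto 2\x-3\langle\x,v\rangle v$; the two computations are identical in content.
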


Consider the mapping $\f_g \colon V_3(\R^{d+1}) \to \S^d \times (\R^{d+1})^3$ defined, analogously to $\psi_g$, by
\begin{equation}\label{eq:diffeo-S}
\f_g(x,e_1,e_2) := \left( x, u_1 , - \frac{1}{2} u_1 + \frac{\sqrt{3}}{2} \n_1 , - \frac{1}{2} u_1 - \frac{\sqrt{3}}{2} \n_1 \right) , 
\end{equation}
where $(u_1, \n_1)$ is the result of applying the Gram--Schmidt process to $(e_1,e_2)$.
Again, it is clear that $\f_g$ is a diffeomorphism onto its image, and that it is smooth in $g$ as a parameter.

We construct a $\th$-network as follows.
Let $g$ be close enough to $g_0$ in $C^2$ so that Lemma \ref{lem:curvature-length} holds, and, to simplify notation, let
\[
\O := \{ (x,e;y) \in V_3(\R^{d+1}) \times \S^d : y \in B_\e(-x) \} .
\]
For each $(x,e;y) \in \O$, let $\widehat{\g}_j$ be the curves given by the initial data
\[
\widehat{\g}_j(0) = x, \quad \widehat{\g}_j'(0) = u_j ,
\]
with $(x,u) = \f_g(x,e)$ as defined in \eqref{eq:diffeo-S}.
Reparametrise $\widehat{\g}_j$ to the unit interval $[0,1]$ as $\g_j(t) := \widehat{\g}_j(tL_g(x,u_j;y))$, and define the $\th$-network $\Th(x,e;y) := \{ \g_j(\cdot; x,e;y) \}_{j=1}^3$

We can now define then the \textit{reduced length functional} as the length of the network $\Th_g(x,e;y)$, in other words,
\begin{equation}\label{eq:Eg}
    E_g(x,e;y) := L_g(x,u_1;y) + L_g(x,u_2;y) + L_g(x,u_3;y) , \quad (x,e;y) \in \O ,
\end{equation}
where, as before, $(x,u) = \f_g(x,e)$.
By Lemma \ref{lem:curvature-length} and the properties of $\f_g$, $E_g$ is smooth and $g \mapsto E_g \in C^2(\O)$ is continuous in $C^2$.

Observe that, by construction, the network $\Th_g(x,e;y)$ satisfies the balancing condition on the unit tangents to the curves at the triple junction $x$, but may fail to satify it at $y$.
Hence, by the first variation formula \eqref{eq:1var} we have that the differential of $E_g$ acts on tangent vectors $(\x,w) \in \G(T(V_3(\R^{d+1}) \times \S^d))$ as
\begin{equation}\label{eq:dEg}
dE_g(x,e;y) \cdot (\x,w) = \left\langle \sum_{j=1}^3 \t_j , w \right\rangle - \sum_{j=1}^3 \int_{\g_j} \langle X_j , k_j \rangle \;ds ,
\end{equation}
where we are denoting by $k_j$ the curvature vector of $\g_j(\cdot; x,e;y)$, and by $X_j := \nabla_{x,e;y} \g_j$ the corresponding variational vector field.
Thus, in order to show that $\Th_g(x,e;y)$ is a minimal network in the sense of Definition \ref{def:min-net} we need to verify that its three curvature vectors $k_j \equiv 0$, $j = 1,2,3$, and that at the triple junction $y$ the sum of the unit tangents $\t_1 + \t_2 + \t_3 = 0$.

To do so, we first rewrite \eqref{eq:dEg} in a more convenient way.
Fix an orthonormal basis $\{ z_1, \ldots, z_{d-2} \}$ for the orthogonal complement, with respect to the metric $g$, of the plane $\Pi(x,u) \subset T_x \S^d$ spanned by $\{ u_1, u_2, u_3 \}$.
Also, recall that $\n_j$ is the unit normal to $u_j$ which preserves the orientation of $\Pi(x,u)$ induced by $\{ u_1, u_2 \}$.
Thus, $\{ \n_j, z_1, \ldots, z_{d-2} \}$ is an orthonormal basis for $u_j^\perp$, and so we can express the initial curvature vector $k_g(x,u_j,y)$ in this basis as
\begin{equation}\label{eq:kg-z-basis}
    k_g(x,u_j;y) = \k_{j0}\n_j + \sum_{i=1}^{d-2} \k_{ji}z_i .
\end{equation}
Now, transport each $\n_j, z_1, \ldots, z_{d-2}$ along $\g_j$ to satisfy the differential equations
\begin{equation}\label{eq:mod-Frenet-Serret}
    \left\{
    \begin{alignedat}{2}
        \nabla_{\t_j} \n_j &= - \k_{j0} \t_j , \\
        \nabla_{\t_j} z_i &= - \k_{ji} \t_j , \quad i = 1, \ldots, d-2 .
    \end{alignedat}
    \right.
\end{equation}
Therefore, for every time $t \in [0,1]$, we have an othonomal basis for $\t_j^\perp \subset T_{\g_j(t)} \S^d$, and such that $\langle k_j , \n_j \rangle \equiv \k_{j0}$, $\langle k_j , z_i \rangle \equiv \k_{ji}$.
Indeed, differentiating along $\g_j$
\[
\frac{d}{dt} \langle k_j , \n_j \rangle = \langle -|k_j|^2\t_j , \n_j \rangle + \langle k_j, -\k_{j0} \t_j \rangle = 0 .
\]
A similar computation verifies the fact for $\frac{d}{dt} \langle k_j , z_i \rangle = 0$.

Thus, in this basis, the curvature vector $k_j$ along $\g_j$ is expressed as
\begin{equation}\label{eq:kj-z-basis}
    k_j(t) = \k_{j0}\n_j(t) + \sum_{i=1}^{d-2} \k_{ji}z_{ji}(t) .
\end{equation}

If we let $\vp_{j0} := \langle X_j , \n_j \rangle$, $\vp_{ji} := \langle X_j, z_{ji} \rangle$, then \eqref{eq:dEg} can be written as
\begin{multline}\label{eq:dEg-phi}
    dE_g(x,e;y) \cdot (\x,w) = \left\langle \sum_{j=1}^3 \t_j , w \right\rangle + \\
    - \sum_{j=1}^3 L_g(x,u_j;y) \left( \k_{j0} \int_0^1 \vp_{j0}(t) \;dt + \sum_{i=1}^{d-2} \k_{ji} \int_0^1 \vp_{ji}(t) \;dt \right) .
\end{multline}
To simplify notation, define the vectors
\begin{equation}\label{eq:Phi-j}
    \F_j(x,e;y) := \left( \int_0^1 \vp_{j0}(t) \;dt \right)\n_j +  \sum_{i=1}^{d-2} \left( \int_0^1 \vp_{ji}(t) \;dt \right) z_i \in u_j^\perp \subset T_x\S^d ,
\end{equation}
and set $\F(x,e;y) := (\F_1,\F_2,\F_3)$.

The next lemma, which is analogous to Lemma \ref{lem:linearinv}, is needed to show that critical points of $E_g$ represent indeed minimal networks.

\begin{lem}\label{lem:linear-inv-S}
    Let $(x,e;y) \in \O$.
    Then, for $g$ sufficiently close to $g_0$ in $C^2$, the linear mapping
    \[
    \x \in T_{(x,e)} V_3(\R^{d+1}) \mapsto \F(x,e;y) \in \bigoplus_{j=1}^3 u_j^\perp
    \]
    is invertible.

    \begin{proof}
        As before, by continuity we only need to show the Lemma for $g = g_0$ and $y = -x$, and since the dimensions of the vector spaces coincide, it is sufficient to show the mapping is injective.
        
        Let $\x \in T_{(x,e)} V_3(\R^{d+1})$, and denote $d\f_0(x,e) \cdot \x = (v,\eta)$, with $\f_0$ as defined in \eqref{eq:diffeo-S}.
        Recall that each curve of the network $\Th_0(x,e;y)$ is defined by
        \[
        \g_j(t;x,e;y) = \widehat{\g}(t L_g(x,u_j;y)) ,
        \]
        where $\widehat{\g}$ is the curve given by Lemma \ref{lem:curvature-length}.
        If we set for each $j=1,2,3$
        \begin{align*}
            X_j(t) &:= \nabla_{x,e} \g_j(t;x,e;y) \cdot \x , \\
            \widehat{X}_j(s) &:= \nabla_{x,u} \widehat\g_j(s;x,u_j;y) \cdot (v,\eta_j) ,
        \end{align*}
        the variational vector fields induced by $\x$, then by the chain rule
        \[
        X'(t) = \widehat{X}'(t L_g(x,u_j;y)) + (\nabla_{x,u} L_g(x,u_j;y) \cdot (x,\eta_j)) \widehat{\g}'(t L_g(x,u_j;y)) .
        \]
        Since we only need the normal component of $X_j$ along $\g_j$, it is sufficient to compute $\widehat X_j^\perp := \widehat X_j - \langle \widehat X_j , \widehat\g_j' \rangle \widehat\g_j'$.
        Note that when when $y = -x$, the three initial curvatures given by Lemma \ref{lem:curvature-length} are zero, $\widehat\g(x,u_j;-x)$ are the geodesics given by \eqref{eq:curves-th-g0} (after reparametrisation), and $L_g(x,u_j;-x) = \pi$.
        Also, by the first variation of length \eqref{eq:1var}, we can compute $\nabla_{x,u} L_g(x,u_j;-x) \cdot (v,\eta_j) = -\langle v,u_j \rangle$.
        Therefore, as in the proof of Lemma \ref{lem:curvature-length}, we see that $\widehat X_j$ solves the Cauchy problem
        \[
            \begin{cases}
                \widehat X_j'' + \widehat X_j = \nabla_{x,e} k_j \cdot \x , \\
                \widehat X_j(0) = v , \quad \widehat X_j'(0) = \eta_j .
            \end{cases}
        \]
        Also, by the same argument as before $\nabla_{x,e} k_j \cdot \x$ is constant.
        Thus, solving for $\widehat X_j$ leads to
        \[
        \widehat X_j(s) = v\cos s + \eta_j\sin s + (1 - \cos s) \nabla_{x,e} k_j \cdot \x .
        \]
        Hence, as straightforward computation gives
        \[
        \widehat X_j(s)^\perp = (v - \langle v,u_j \rangle u_j) \cos s + (\eta_j + \langle v,u_j \rangle x) \sin s + (1 - \cos s) \nabla_{x,e} k_j \cdot \x .
        \]
        Now, when computing $X_j$ the junction $y = -x$ is fixed, and thus
        \begin{equation}\label{eq:Xj-w}
            X_j(1) = \widehat X_j(\pi) + \langle v, u_j \rangle u_j = -v + 2\nabla_{x,e} k_j \cdot \x + \langle v, u_j \rangle u_j = 0 ,
        \end{equation}
        which in turn implies
        \[
        \nabla_{x,e} k_j \cdot \x = \frac{1}{2}(v - \langle v , u_j \rangle u_j) .
        \]
        Now, since $k_0(x,u_j,-x) = 0$, by \eqref{eq:kg-z-basis} and \eqref{eq:mod-Frenet-Serret} $\n_j$, $z_{ji} \equiv z_i$ are constant as vectors in $\R^{d+1}$, orthogonal to $x$.
        Therefore, \eqref{eq:Phi-j} can be written as
        \begin{align}\label{eq:Phi0-j}
            \begin{split}
                \pi\F_j(x,e;-x)
                &= \int_0^\pi \widehat X_j(s)^\perp \;ds \\
                &= 2(\eta_j + \langle v, u_j \rangle x) + \frac{\pi}{2} \nabla_{x,e} k_j \cdot \x \\
                &= 2(\eta_j + \langle v, u_j \rangle x) + \frac{\pi}{2}(v - \langle v , u_j \rangle u_j) .
            \end{split}
        \end{align}
        
        Suppose then that each $\F_j(x,e;-x) = 0$.
        If we sum in $j$ in \eqref{eq:Phi0-j}, recalling that $u_1 + u_2 + u_3 = 0$ and $\eta_1 + \eta_2 + \eta_3 = 0$, we deduce
        \[
        3v - \sum_{j=1}^3 \langle v , u_j \rangle u_j = 0 .
        \]
        This means that $v$ belongs to the plane spanned by $\{ u_1, u_2, u_3 \}$.
        Hence, if we take the product against, say, $u_1$, and since $\langle u_1 , u_2 \rangle = \langle u_1 , u_3 \rangle = \frac{1}{2}$,
        \[
        3\langle v , u_1 \rangle = \langle v , u_1 \rangle - \frac{\langle v , u_2 \rangle + \langle v , u_2 \rangle}{2} = \frac{3}{2}\langle v , u_1 \rangle ,
        \]
        which implies $\langle v , u_1 \rangle = 0$.
        In a similar way we deduce $\langle v , u_2 \rangle = 0$, and so $v = 0$.
        Plugging this into \eqref{eq:Phi0-j} we conclude also that $\eta_j = 0$, $j=1,2,3$.
        Thus, $\x = d\f_0^{-1}(x,e) \cdot (v,\eta) = 0$, as we were to prove.
    \end{proof}
\end{lem}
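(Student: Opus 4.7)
The plan is to mimic the strategy of Lemma \ref{lem:linearinv}: reduce to the unperturbed situation by continuity, observe that the domain $T_{(x,e)}V_3(\R^{d+1})$ and the target $\bigoplus_{j=1}^{3} u_j^\perp$ both have dimension $3(d-1)$, and then prove injectivity. Since $\F(x,e;y)$ depends continuously on $g$ in $C^2$ (via Lemma \ref{lem:curvature-length} and the smooth dependence of $\f_g$), it suffices to treat $g=g_0$ and $y=-x$, in which case $\widehat{\g}_j$ is the half great circle from $x$ to $-x$ with initial velocity $u_j$ and zero initial curvature, and $L_{g_0}(x,u_j;-x)=\pi$. Moreover, the vectors $\n_j$ and $z_i$ are parallel along each $\widehat{\g}_j$, so $\F_j$ is simply the integral of the component of $\widehat{X}_j$ normal to $\widehat{\g}_j'$ over $[0,\pi]$.

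The central calculation is to obtain $\widehat{X}_j$ explicitly. Writing $d\f_0(x,e)\cdot\x=(v,\eta_1,\eta_2,\eta_3)$ and viewing $\S^d\subset\R^{d+1}$, differentiating the Frenet--Serret system \eqref{eq:Frenet-Serret-S0} in the parameters and using that the base curve is a geodesic yields a linear second-order ODE of the form $\widehat{X}_j''+\widehat{X}_j=\nabla_{x,e}k_j\cdot\x$, with right-hand side constant in $s$ (since $\nabla_{k_0}k$ vanishes along a geodesic and depends only on initial data). The initial conditions $\widehat{X}_j(0)=v$, $\widehat{X}_j'(0)=\eta_j$ then give a closed-form solution in $\cos s,\sin s$, and $(1-\cos s)$ multiplying the forcing term. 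The constraint that the junction $y=-x$ is fixed translates into $\widehat{X}_j(\pi)$ having vanishing component normal to $u_j$, which determines $\nabla_{x,e}k_j\cdot\x$ as a linear expression in $v$, and so produces an explicit formula for $\F_j$ as a combination of $\eta_j$, $\langle v,u_j\rangle x$, and the projection of $v$ onto $u_j^\perp$.

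To conclude the injectivity, I will assume $\F_1=\F_2=\F_3=0$ and exploit the rigid relations $u_1+u_2+u_3=0$ (which implies $\eta_1+\eta_2+\eta_3=0$ upon differentiation) together with the identities $\langle u_i,u_j\rangle=-\tfrac{1}{2}$ for $i\neq j$. Summing the three vanishing equations eliminates all $\eta_j$-terms and leaves a linear equation in $v$ that forces $v$ into the plane $\Pi(x,u)$; pairing the resulting identity against $u_1$ and $u_2$ and exploiting the inner product structure then forces $\langle v,u_1\rangle=\langle v,u_2\rangle=0$, hence $v=0$. Substituting back yields $\eta_j=0$ for each $j$, so $\x=0$.

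I expect the main technical obstacle to be the explicit computation of the forcing term $\nabla_{x,e}k_j\cdot\x$ at the base point, since it requires differentiating the implicitly defined initial curvature $k_g(x,v;y)$ from Lemma \ref{lem:curvature-length}. The clean trick is to avoid computing it directly: it enters the ODE as an unknown constant vector which is then fixed by the endpoint constraint $\widehat{X}_j(\pi)^\perp=0$ coming from the fact that $y$ varies with the tangent vector only in a controlled way. Once this is handled, the rest is the linear-algebra argument sketched above, closely mirroring the ball case.
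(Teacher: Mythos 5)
Your proposal follows the paper's proof essentially step for step: the same reduction by continuity to $g=g_0$, $y=-x$, the same linearised Frenet--Serret equation with constant forcing term determined by the fixed-endpoint constraint at $-x$ rather than by direct differentiation of the implicit curvature, and the same summation-over-$j$ linear algebra to force $v=0$ and then $\eta_j=0$. The approach is correct and matches the paper's argument.
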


We can therefore conclude that the functional $E_g$ defined in \eqref{eq:Eg} is a reduced functional for the length functional in the space of $\th$-networks in the sense of the following proposition.

\begin{prp}\label{prp:reduced-Eg}
Every critical point of $E_g$ determines a minimal $\th$-network in the sense of Definition \ref{def:min-net}.

\begin{proof}
Let $(x,e;y) \in V_3(\R^{d+1}) \times \S^d$ be a critical point of $E_g$.
Then, by Lemma \ref{lem:linear-inv-S} and formula \eqref{eq:dEg-phi}, for every $j \in \{ 1, 2, 3 \}$ and $i \in \{ 0, \ldots, d-2 \}$ we can find a tangent vector $\x \in T_{(x,e)} V_3(\R^{d+1})$ such that
\[
dE_g(x,e;y) \cdot (\x,0) = \k_{ji} = 0 .
\]
Thanks to formula \eqref{eq:kj-z-basis}, this implies that the curvature $k_j$ of each edge $\g_j$ is equal to $0$.
On the other hand, by formula \eqref{eq:dEg} this implies
\[
dE_g(x,e;y) \cdot (0,w) = \langle \t_1 + \t_2 + \t_3 , w \rangle = 0
\]
for every tangent vector $w \in T_y\S^d$.
Therefore, $\t_1 + \t_2 + \t_3 = 0$ and thus the network $\Th_g(x,e;y)$ is minimal.
\end{proof}
\end{prp}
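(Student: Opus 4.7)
The plan is to exploit the decomposition of $dE_g$ in (\ref{eq:dEg-phi}) into two essentially independent pieces---a boundary term $\langle \sum_j \t_j,w\rangle$ arising from variations of the free junction $y$, and a curvature term controlled by $\F(x,e;y)$ arising from variations of the constrained pair $(x,e)$---and to test criticality against two specific classes of variations in order to recover separately the geodesic condition on each edge and the balancing of unit tangents at $y$.

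First I would test the criticality condition against tangent vectors of the form $(\x,0)$. This yields
\[
\sum_{j=1}^3 L_g(x,u_j;y)\,\langle k_g(x,u_j;y),\F_j(x,e;y)\rangle = 0
\]
for every $\x\in T_{(x,e)}V_3(\R^{d+1})$. By Lemma \ref{lem:linear-inv-S}, the linear map $\x\mapsto\F=(\F_1,\F_2,\F_3)$ is an isomorphism onto $\bigoplus_{j=1}^3 u_j^\perp$ (a reassuring sanity check: both sides have dimension $3(d-1)$). Hence I may freely prescribe $\F$: by choosing it to equal a single basis vector $\n_j$ or $z_i$ on one component and zero on the others, each coefficient $\k_{ji}$ in the sum above is isolated and forced to vanish. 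By (\ref{eq:kj-z-basis}), the curvature $k_j$ then vanishes identically along each $\g_j$, so every edge is a geodesic of $g$.

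With the curvatures killed, the second term in (\ref{eq:dEg-phi}) drops out, and testing against $(0,w)$ for arbitrary $w\in T_y\S^d$ immediately gives $\t_1+\t_2+\t_3=0$ at $y$. Balancing at the other junction $x$ is automatic from the construction of $\Th_g$, since by the $2\pi/3$-rotation prescription in (\ref{eq:diffeo-S}) the initial tangents satisfy $u_1+u_2+u_3=0$; both junctions are then triple, and since $\S^d$ has no boundary all requirements of Definition \ref{def:min-net} are met. The only non-routine step is the inversion supplied by Lemma \ref{lem:linear-inv-S}, without which one could not decouple the $3(d-1)$ individual curvature components from the variational parameter $\x$; everything else is bookkeeping within the first variation formula (\ref{eq:1var}).
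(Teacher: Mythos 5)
Your proposal is correct and follows essentially the same route as the paper's proof: test against $(\x,0)$, use the invertibility from Lemma \ref{lem:linear-inv-S} to prescribe $\F$ freely and kill each curvature coefficient $\k_{ji}$, then test against $(0,w)$ to obtain the balancing condition at $y$. Your explicit rewriting of the second term of \eqref{eq:dEg-phi} as $-\sum_j L_g\langle k_g(x,u_j;y),\F_j\rangle$ and the dimension count are accurate elaborations of what the paper leaves implicit.
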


As in Section \ref{sec:ball}, we show the non-degeneracy of the starting critical manifold of $\th$-networks for the standard metric $g_0$.
\begin{lem}\label{lem:Eg-nondeg}
    The only critical points of the function $E_0$, defined in \eqref{eq:Eg} when $g = g_0$, are $(x,e;-x)$, $(x,e) \in V_3(\R^{d+1})$, at which it attains the maximum value $3\pi$.
    Moreover, the critical manifold $\{ (x,e;y) \in V_3(\R^{d+1}) \times \S^d : x = -y \} \simeq V_3(\R^{d+1})$ is non-degenerate in the sense of Bott.

    \begin{proof}
        To compute the Hessian $d^2E_0(x,e;-x)$, we differentiate \eqref{eq:dEg} using the fact that $k_j \equiv 0$, $\t_1 + \t_2 + \t_3 = 0$, so
        \[
        d^2E_0(x,e;-x) (\x,w;\x,w) = -\sum_{j=1}^3 \int_{\g_j} \langle X_j , \nabla_{x,e;y} k_j \cdot (\x,w) \rangle \;ds
        \]
        By a similar argument as in the proof of Lemma \ref{lem:linear-inv-S}, we can use the Cauchy problem solved by $\widehat X_j$, and then equate \eqref{eq:Xj-w} to $w$ instead to $0$ to compute
        \[
        \nabla_{x,e;y} k_j \cdot (\x,w)(t) = \frac{1}{2}(v+w - \langle v+w , u_j \rangle u_j) , \quad j = 1,2,3 .
        \]
        which again is constant as a vector in $\R^{d+1}$.
        On the other hand, by formula \eqref{eq:Phi0-j} for $\F_j(x,e;-x)$ we then deduce
        \begin{align*}
            \begin{split}
                d^2E_0(x,e;-x) (\x,w;\x,w)
                &= -\sum_{j=1}^3 2\langle \eta_j , \nabla_{x,e;y} k_j \cdot (\x,w) \rangle + \pi |\nabla_{x,e;y} k_j \cdot (\x,w)|^2 \\
                &= -\frac{\pi}{4} \sum_{j=1}^3 |v+w - \langle v+w , u_j \rangle u_j|^2
            \end{split}
        \end{align*}
        Therefore, $d^2E_0(x,e;-x)$ is negative semidefinite.
        On the other hand, if $(\x,w)$ is in its kernel, then $v+w = \langle v+w , u_j \rangle u_j$ for every $j$, which can only be possible if $v = - w$ since $\{ u_1 , u_2 \}$ is linearly independent.
        The lemma follows after we note that a tangent vector satisfying the relation $v = -w$ is tangent to $V_3(\R^{d+1}) \subset V_3(\R^{d+1}) \times \S^d$.
    \end{proof}
\end{lem}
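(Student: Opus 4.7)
The plan is to first identify the critical points of $E_0$, and then compute the Hessian at those points via an ODE analysis analogous to that used in Lemma~\ref{lem:curvature-length}.

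For the critical points, I would apply Proposition~\ref{prp:reduced-Eg} with $g = g_0$: any critical $(x,e;y) \in \O$ gives a minimal $\th$-network on the round sphere, so each edge is a geodesic arc emanating from $x$ with initial direction $u_j$. On $(\S^d, g_0)$, three geodesics from $x$ with linearly independent initial directions reunite simultaneously only at the antipode, forcing $y = -x$. The three arcs are then half great circles, so $E_0(x,e;-x) = 3\pi$, and the balancing $\t_1(y) + \t_2(y) + \t_3(y) = -u_1 - u_2 - u_3 = 0$ holds automatically. This identifies the critical set as $\{y = -x\} \simeq V_3(\R^{d+1})$, and since $E_0 \leq 3\pi$ on $\O$, these are global maxima.

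For the Hessian at such a point, I would differentiate the first variation formula \eqref{eq:dEg}. At $(x,e;-x)$ we have both $k_j \equiv 0$ and $\sum_j\t_j = 0$, so the only second-order term that survives is the one in which the derivative lands on $k_j$, yielding
\[
d^2 E_0(x,e;-x)\bigl((\x,w),(\x,w)\bigr) = -\sum_{j=1}^3 \int_{\g_j} \langle X_j, \nabla_{(x,e;y)} k_j \cdot (\x,w)\rangle\,ds.
\]
To evaluate $\nabla k_j \cdot (\x,w)$ I would redo the Cauchy-problem argument in the proof of Lemma~\ref{lem:curvature-length}, only now allowing the endpoint to move by $w$ rather than being held fixed at $-x$. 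Writing $d\f_0(x,e)\cdot\x = (v,\eta)$ and imposing that the linearised geodesic hits $w$ rather than $0$ at time $\pi$, the same elementary calculation gives $\nabla k_j \cdot (\x,w) = \tfrac{1}{2}(v+w - \langle v+w, u_j\rangle u_j)$; in particular this vector lies in $u_j^\perp$ and is constant in $s$ when viewed in the ambient $\R^{d+1}$.

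Substituting into the Hessian and using formula \eqref{eq:Phi0-j} to recognise $\int_{\g_j} X_j\,ds$ (the tangential part of $X_j$ drops out since $\nabla k_j \cdot (\x,w) \perp u_j$), I expect to obtain after simplification
\[
d^2 E_0(x,e;-x)\bigl((\x,w),(\x,w)\bigr) = -\frac{\pi}{4}\sum_{j=1}^3 \bigl| v+w - \langle v+w, u_j\rangle u_j \bigr|^2.
\]
This is manifestly negative semidefinite, and $(\x,w)$ lies in its kernel iff $v+w$ is a scalar multiple of every $u_j$; by the linear independence of $\{u_1,u_2\}$ this forces $v + w = 0$. A tangent vector $(\x,w)$ to $V_3(\R^{d+1})\times\S^d$ with $w = -v$ is exactly a tangent vector to the diagonal $\{y = -x\}\simeq V_3(\R^{d+1})$, establishing Bott non-degeneracy. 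The main obstacle I anticipate is the bookkeeping in the Hessian computation: specifically, confirming that the terms obtained from differentiating the boundary contribution $\langle \sum_j\t_j,w\rangle$ in \eqref{eq:dEg} either vanish at the critical point or combine cleanly with the integral term, since a priori $\sum_j\t_j$ is a smooth function of $(x,e;y)$ that merely happens to vanish at $(x,e;-x)$.
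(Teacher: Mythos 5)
Your proposal follows essentially the same route as the paper: differentiate the first variation formula \eqref{eq:dEg} at the critical point, solve the linearised Cauchy problem with the endpoint displaced by $w$ to get $\nabla_{x,e;y}k_j\cdot(\x,w)=\tfrac{1}{2}(v+w-\langle v+w,u_j\rangle u_j)$, and conclude that the Hessian equals $-\tfrac{\pi}{4}\sum_j|v+w-\langle v+w,u_j\rangle u_j|^2$ with kernel exactly $\{v=-w\}=T V_3(\R^{d+1})$. You additionally supply the identification of the critical set (three great circles through $x$ meet again only at $-x$) and flag the differentiation of the boundary term $\langle\sum_j\t_j,w\rangle$, both of which the paper's proof passes over silently; these are worthwhile additions rather than deviations.
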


Lemma \ref{lem:Eg-nondeg} allows us to further reduce the variables as shown in next lemma, which ultimately leads to the existence and multiplicity of minimal $\th$-networks.

\begin{lem}\label{lem:Lyapunov-Eg-S}
For each $g$ sufficiently close to $g_0$ in $C^2$ and $(x,e) \in V_3(\R^{d+1})$ there exists $\e > 0$ and a unique $y = y_g(x,e) \in B_\e(-x)$ such that the partial derivative
\[
\nabla_y E_g(x,e; y_g(x,e)) = 0 .
\]
Moreover, we have that $y_g(x,e) \to -x$ in $C^1$ as $g \to g_0$ in $C^2$.

\begin{proof}
Define the function
\[
H_g(x,e;y) := \nabla_y E_g(x,e;y) , \quad (x,e;y) \in \O .
\]
As a consequence of Lemma \ref{lem:curvature-length} we see that $g \mapsto H_g \in C^1$ is continuous in $C^2$, and because of Lemma \ref{lem:Eg-nondeg}, we have that the partial derivative
\[
\nabla_y H_0(x,e;-x) = \nabla_y^2 E_0(x,e;-x)
\]
is an isomorphism for each $(x,e) \in V_3(\R^{d+1})$.
Hence, as before, we can apply the implicit function theorem to find a small enough $\e > 0$ and a unique $y_g = y_g(x,e) \in B_\e(-x)$ such that
\[
H_g(x,e; y_g(x,e)) = \nabla_y E_g(x,e; y_g(x,e)) = 0
\]
for every $(x,e) \in V_3(\R^{d+1})$ and $g$ in a $C^2$-neighbourhood of $g_0$.
\end{proof}
\end{lem}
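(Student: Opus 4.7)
The plan is to apply the implicit function theorem with respect to the $y$ variable, using $(x,e)$ and $g$ as parameters, exactly as in the proof of Lemma~\ref{lem:Lyapunov-Fg}. Define
\[
H_g(x,e;y) := \nabla_y E_g(x,e;y), \qquad (x,e;y) \in \O.
\]
By the construction of $E_g$ and Lemma~\ref{lem:curvature-length}, the map $g \mapsto H_g$ is continuous into $C^1(\O)$ when $g$ varies in $C^2$. Moreover, at the model solution $y=-x$ and $g = g_0$, we have $H_0(x,e;-x) = \nabla_y E_0(x,e;-x) = 0$ since, by Lemma~\ref{lem:Eg-nondeg}, the point $(x,e;-x)$ is a critical point of $E_0$.

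The key input is that the partial derivative $\nabla_y H_0(x,e;-x) = \nabla_y^2 E_0(x,e;-x)$ is an isomorphism of $T_{-x}\S^d$. This follows directly from the Bott non-degeneracy established in Lemma~\ref{lem:Eg-nondeg}: the Hessian $d^2 E_0(x,e;-x)$ is negative semidefinite, and its kernel consists of tangent vectors $(\x,w)$ with $v = -w$, which lie in the tangent space to the critical submanifold $\{(x,e;-x)\} \simeq V_3(\R^{d+1})$. Since this submanifold is transverse to the $\S^d$-factor, the restriction of the Hessian to the slice $\{0\} \oplus T_{-x}\S^d$ is negative definite, hence invertible.

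Given these two facts, a direct application of the implicit function theorem with parameters $(x,e,g)$ produces, for $g$ in a $C^2$-neighbourhood of $g_0$ and $(x,e) \in V_3(\R^{d+1})$, a unique $y_g(x,e)$ near $-x$ solving $H_g(x,e;y_g(x,e)) = 0$. Compactness of $V_3(\R^{d+1})$ allows us to choose a single $\e>0$ uniformly in $(x,e)$ by a finite cover argument. The smooth dependence given by the implicit function theorem, combined with the $C^2$-continuity of $g \mapsto H_g$, delivers the claimed $C^1$ convergence $y_g(x,e) \to -x$ as $g \to g_0$.

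The expected main obstacle is not in the implicit function step itself, which is essentially mechanical once the setup is fixed, but in confirming that the Bott non-degeneracy of the critical manifold, as stated in Lemma~\ref{lem:Eg-nondeg}, indeed implies invertibility of $\nabla_y^2 E_0(x,e;-x)$ restricted to the transverse slice. Once this identification is made, the rest of the argument parallels Lemma~\ref{lem:Lyapunov-Fg} verbatim, and no further analytical work is required.
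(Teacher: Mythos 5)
Your proposal is correct and follows essentially the same route as the paper: define $H_g = \nabla_y E_g$, invoke the continuity in $g$ from Lemma \ref{lem:curvature-length} and the non-degeneracy from Lemma \ref{lem:Eg-nondeg} to get invertibility of $\nabla_y^2 E_0(x,e;-x)$, and conclude by the implicit function theorem with $(x,e,g)$ as parameters. Your extra observation that negative semidefiniteness of $d^2E_0$ together with the identification of its kernel as $\{v=-w\}$ forces the restriction to the slice $\{0\}\oplus T_{-x}\S^d$ to be negative definite is exactly the detail the paper leaves implicit, and it is argued correctly.
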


The last ingredient needed the Lusternik--Schnirelmann category (see Definition \ref{def:LS} in the Appendix) of $V_3(\R^{d+1})$.

\begin{prp}\label{prp:LScat-S}
The Lusternik--Schnirelmann category of $V_3(\R^{d+1})$ is equal to $8$, if $d=2$, greater that or equal to $5$, if $d = 3,4$, and equal to $4$, if $d \geq 5$.

\begin{proof}
When $d = 2$, we note that $V_3(\R^3)$ is homeomorphic to the orthogonal group $O(3)$, which in turn is homeomorphic to two disjoint copies of the projective space $\RP^3$, whose category is equal to $4$ \cite{CLOT-LScat2003}*{Ex. 1.7}.
For $d = 3,4$, we cite \cite{Nishimoto-LScatStiefel2007}*{Remark 1.5} which asserts $\cat(V_3(\R^{d+1})) \geq 5$, and for $d \geq 5$, \cite{Nishimoto-LScatStiefel2007}*{Theorem 1.3} gives $\cat(V_3(\R^{d+1})) = 4$.
\end{proof}
\end{prp}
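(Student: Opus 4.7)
The plan is to split the argument by dimension. For $d=2$, I would first identify $V_3(\R^3)$ with the orthogonal group $O(3)$, since an orthonormal $3$-frame in $\R^3$ is exactly the list of columns of an orthogonal matrix. Then $O(3)$ decomposes into two connected components, each homeomorphic to $SO(3) \simeq \RP^3$. Combining the classical computation $\cat(\RP^n) = n+1$ (so $\cat(\RP^3) = 4$) with the additivity of Lusternik--Schnirelmann category on disjoint unions of nonempty normal spaces, I would obtain $\cat(V_3(\R^3)) = 4 + 4 = 8$.

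For the remaining cases $d \geq 3$ the strategy is a direct citation of Nishimoto's results on the LS-category of Stiefel manifolds. Concretely, Remark 1.5 of \cite{Nishimoto-LScatStiefel2007} provides the lower bound $\cat(V_3(\R^{d+1})) \geq 5$ for $d \in \{3,4\}$, while Theorem 1.3 of the same paper gives the equality $\cat(V_3(\R^{d+1})) = 4$ for all $d \geq 5$. Since these two ranges cover the remaining dimensions exactly as stated, no further input is needed.

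The main obstacle is the sharp computation in the intermediate dimensions $d = 3, 4$, which is not settled by the cited works: Nishimoto's techniques supply a cup-length/sectional-category lower bound, but constructing a matching categorical open cover of $V_3(\R^{d+1})$ of the correct cardinality appears delicate and is beyond the scope I would attempt here. Fortunately Theorem \ref{thm:mainS} only requires a lower bound on $\cat(V_3(\R^{d+1}))$ to produce the claimed number of minimal $\th$-networks, so the gap between $5$ and the true value in these dimensions does not affect the main multiplicity result.
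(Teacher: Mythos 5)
Your proposal is correct and follows essentially the same route as the paper: the $d=2$ case via $V_3(\R^3)\simeq O(3)\simeq \RP^3\sqcup\RP^3$ with $\cat(\RP^3)=4$, and the cases $d\geq 3$ by citing Nishimoto's Remark~1.5 and Theorem~1.3 respectively. Your explicit appeal to additivity of the category over the two components, and your observation that only the lower bound is needed for Theorem~\ref{thm:mainS}, are both consistent with (and slightly more detailed than) the paper's argument.
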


We are now set to prove Theorem \ref{thm:mainS}.

\begin{proof}[Proof of Theorem \ref{thm:mainS}]
By Lemma \ref{lem:curvature-length}, there is a $C^2$-neighbourhood of $g_0$ such that for every metric $g$ we can define the reduced length functional $E_g$ as in \eqref{eq:Eg}.
If we set
\[
\Psi_g(x,e) := E_g(x,e; y_g(x,e)) , \quad (x,e) \in V_3(\R^{d+1}) ,
\]
where $y_g$ is given in Lemma \ref{lem:Lyapunov-Eg-S}, by the chain rule each critical point of $\Psi_g$ determines a critical point of $E_g$.
By the Lusternik--Schnirelmann Theorem \ref{thm:LS}, we can then bound from below the number of geometrically distinct critical points of $E_g$ by the Lusternik--Schnirelmann category of the manifold $V_3(\R^{d+1})$, for $d \geq 3$, and by $\cat(V_3(\R^{d+1}))/2$, for $d=2$, which is given by Proposition \ref{prp:LScat-S}.
Finally, by Proposition \ref{prp:reduced-Eg} every critical point of $E_g$ represents a minimal $\th$-network in the sense of Definition \ref{def:min-net}.
\end{proof}

\begin{rem}\label{rem:S3action-S}
As pointed in Remark \ref{rem:S3action}, it could be possible to consider the quotient manifold $V_3(\R^{d+1})/S_3$, where the action is defined analogously to \eqref{eq:S3action}, to obtain a more precise bound on the number of minimal $\th$-networks in dimensions $d \geq 3$.
For the case $d = 2$, the quotient space is an $\S^1$-bundle over $\S^2$ with \textit{clutching function} $z \mapsto z^6$ (see \cite{Hatcher-Ktheory2017}*{Chapter 1}), where we are identifying $\S^1$ as the set of complex numbers of unit absolute value, whose category according to \cite{Iwase-LScatSphere-2003}*{Theorem 2.7} is $4$.
\end{rem}

%----------------------------------------------------------------------------------

\begin{appendices}

\section{Appendix}

For the reader's convenience, we define the Lusternik--Schnirelmann category and state the Lusternik--Schnirelmann Theorem, both of which are utilized in the proofs presented in the preceding sections.
For a more comprehensive treatment and applications we refer the reader to \cite{CLOT-LScat2003}.

Let $M$ be a topological space.
A subset $A \subset M$ is \textit{contractible} in $M$ if the inclusion $i: A \to M$ is homotopic to a constant.
The following is a less general definition which we adapted for the sake of brevity and clarity. (cf. \cite{AmbrosettiMalchiodi-Nonlinear2007}*{Definition 9.2}, \cite{CLOT-LScat2003}*{Definition 1.1}).

\begin{defn}[Lusternik--Schnirelmann category]\label{def:LS}
Let $A \subset M$ be as above.
The \textit{Lusternik--Schnirelmann category} of $A$ with respect to $M$, denoted by $\cat(A,M)$, is the least integer $k$ such that $A \subset A_1 \cup \cdots \cup A_k$, with $A_j$ closed contractible in $M$, $j = 1, \ldots, k$.
We set $\cat(A,M) = \infty$ if there are no integers with the above property.
We also denote $\cat(M) := \cat(M,M)$.
\end{defn}

We remark that some authors prefer to define the Lusternik--Schnirelmann category to be one less than our definition.
In this way, the category of a contractible set becomes $0$ instead of $1$.

The following theorem gives a bound on the number of critical levels and points of a smooth function (cf. \cite{AmbrosettiMalchiodi-Nonlinear2007}*{Theorem 9.10}, \cite{CLOT-LScat2003}*{Proposition 1.23}).

\begin{thm}[Lusternik--Schnirelmann]\label{thm:LS}
Let $M$ be a smooth compact manifold which is connected, and suppose that $F: M \to \R$ is a smooth function with critical point set $Z := \{ x \in M : dF(x) = 0 \}$.
Then $F$ has at least $\cat(M)$ critical points.
More precisely, let
\[
C_k := \{ A \subset M : \cat(A,M) \geq k \}
\]
and define
\[
c_k := \inf_{A \in C_k} \max_{x \in A} F(x) ,
\]
then
\begin{itemize}
    \item any $c_k$ is a critical level of $F$;
    \item if for some integer $q \geq 1$ there holds $c_k = c_{k+1} = \cdots = c_{k+q}$, and we denote by $c$ the common value, then $\cat(F^{-1}(c) \cap Z,M) \geq q+1$.
\end{itemize}
In particular, whenever two critical levels coincide, $F$ has an infinite number of critical points.
\end{thm}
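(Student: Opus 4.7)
The plan is a classical minimax argument combining a deformation lemma for $F$ with two fundamental properties of the Lusternik--Schnirelmann category: (i) homotopy invariance, $\cat(h(A),M) = \cat(A,M)$ whenever $h: M \to M$ is a homeomorphism isotopic to the identity, and (ii) subadditivity, $\cat(B_1 \cup B_2, M) \leq \cat(B_1, M) + \cat(B_2, M)$ for closed subsets $B_1, B_2 \subset M$. The main analytic input is the deformation lemma itself: using a smooth pseudo-gradient vector field for $F$ on the compact manifold $M$, one constructs, for each non-critical value $c$ of $F$, some $\e > 0$ and a smooth isotopy $h: M \to M$ with $h(F^{c+\e}) \subset F^{c-\e}$, where $F^{a} := \{x \in M : F(x) \leq a\}$. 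A standard refinement shows that for any open neighborhood $U$ of the critical points at level $c$, there exist $\e > 0$ and an isotopy $h$ with $h(F^{c+\e} \setminus U) \subset F^{c-\e}$.

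The first step is to show each $c_k$ is a critical value. Suppose for contradiction that $c_k$ is a regular value, and apply the deformation lemma to obtain $\e > 0$ and the corresponding isotopy $h$. By the definition of $c_k$ as an infimum, pick $A \in C_k$ with $\max_A F \leq c_k + \e$. Then $h(A) \subset F^{c_k - \e}$, and by homotopy invariance $\cat(h(A), M) = \cat(A, M) \geq k$, so $h(A) \in C_k$; hence $c_k \leq \max_{h(A)} F \leq c_k - \e$, a contradiction. Since $c_1 \leq c_2 \leq \cdots \leq c_{\cat(M)}$ follows from $C_{k+1} \subset C_k$, one already recovers $\cat(M)$ distinct critical levels whenever the $c_k$ are all distinct.

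The second step is the multiplicity statement. Suppose $c := c_k = c_{k+1} = \cdots = c_{k+q}$ and, towards a contradiction, $\cat(N, M) \leq q$ where $N := F^{-1}(c) \cap Z$. Since $M$ is an ANR, $N$ admits a closed neighborhood $U$ with $\cat(U, M) \leq q$; choose an open set $V$ with $N \subset V$ and $\overline{V} \subset \mathrm{int}(U)$. The refined deformation lemma gives $\e > 0$ and an isotopy $h$ with $h(F^{c+\e} \setminus V) \subset F^{c-\e}$. Pick $A \in C_{k+q}$ with $\max_A F \leq c + \e$. From the cover $A \subset U \cup (A \setminus \mathrm{int}(U))$ by two closed sets, subadditivity yields $k + q \leq \cat(A, M) \leq q + \cat(A \setminus \mathrm{int}(U), M)$, so $A \setminus \mathrm{int}(U) \in C_k$. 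Applying $h$ and using homotopy invariance, $h(A \setminus \mathrm{int}(U)) \in C_k$; but $A \setminus \mathrm{int}(U) \subset F^{c+\e} \setminus V$ gives $h(A \setminus \mathrm{int}(U)) \subset F^{c-\e}$, forcing $c_k \leq c - \e$, a contradiction.

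The main obstacle is the construction of the deformation $h$: it is standard but delicate, requiring one to build a smooth pseudo-gradient vector field for $F$, cut it off outside a tubular neighborhood of the level $c$ (and outside $U$ in the refined version), and integrate the resulting field for a short time to obtain the desired isotopy; compactness of $M$ ensures global existence and the quantitative decrease of $F$ along flow lines. The final assertion of the theorem follows from the multiplicity statement with $q = 1$: two coinciding $c_k$'s give $\cat(N, M) \geq 2$, whereas any non-empty finite subset of the connected manifold $M$ is contractible in $M$ (concatenate paths to a common base point) and hence has category equal to $1$, so $N$ must be infinite.
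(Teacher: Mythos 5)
The paper does not prove this theorem: it is quoted from the literature with references (Ambrosetti--Malchiodi, Theorem 9.10; Cornea--Lupton--Oprea--Tanr\'e, Proposition 1.23), so there is no internal proof to compare against. Your argument is the standard minimax proof given in those references --- deformation lemma from a pseudo-gradient flow combined with monotonicity, subadditivity and the continuity property of $\cat(\cdot,M)$ (the latter legitimately invoked since $Z$ is closed and a compact manifold is an ANR) --- and it is correct, including the reduction of the final assertion to the fact that a nonempty finite subset of a connected manifold has category one.
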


\end{appendices}

%-----------------------------------------------------------------------

\begin{bibdiv}
\begin{biblist}

\bib{AmbrosettiMalchiodi-Nonlinear2007}{book}{
      author={Ambrosetti, Antonio},
      author={Malchiodi, Andrea},
       title={Nonlinear analysis and semilinear elliptic problems},
      series={Cambridge Studies in Advanced Mathematics},
   publisher={Cambridge University Press, Cambridge},
        date={2007},
      volume={104},
        ISBN={978-0-521-86320-9; 0-521-86320-1},
         url={https://doi.org/10.1017/CBO9780511618260},
      review={\MR{2292344}},
}

\bib{BergerBombieri-PoincareIso1981}{article}{
      author={Berger, M.~S.},
      author={Bombieri, E.},
       title={On {P}oincar\'{e}'s isoperimetric problem for simple closed geodesics},
        date={1981},
        ISSN={0022-1236},
     journal={J. Functional Analysis},
      volume={42},
      number={3},
       pages={274\ndash 298},
         url={https://doi.org/10.1016/0022-1236(81)90091-4},
      review={\MR{626446}},
}

\bib{Birkhoff-DynSys2degs1917}{article}{
      author={Birkhoff, George~D.},
       title={Dynamical systems with two degrees of freedom},
        date={1917},
        ISSN={0002-9947,1088-6850},
     journal={Trans. Amer. Math. Soc.},
      volume={18},
      number={2},
       pages={199\ndash 300},
         url={https://doi.org/10.2307/1988861},
      review={\MR{1501070}},
}

\bib{Cheng-StableNets2024}{article}{
      author={Cheng, Herng~Yi},
       title={Stable {G}eodesic {N}ets in {C}onvex {H}ypersurfaces},
        date={2024},
        ISSN={1050-6926,1559-002X},
     journal={J. Geom. Anal.},
      volume={34},
      number={2},
       pages={56},
         url={https://doi.org/10.1007/s12220-023-01489-2},
      review={\MR{4683007}},
}

\bib{ChambersLiokumovichNabutovskyRotman-GeoNetsNonComp2023}{article}{
      author={Chambers, Gregory~R.},
      author={Liokumovich, Yevgeny},
      author={Nabutovsky, Alexander},
      author={Rotman, Regina},
       title={Geodesic nets on non-compact {R}iemannian manifolds},
        date={2023},
        ISSN={0075-4102,1435-5345},
     journal={J. Reine Angew. Math.},
      volume={799},
       pages={287\ndash 303},
         url={https://doi.org/10.1515/crelle-2023-0028},
      review={\MR{4595313}},
}

\bib{CLOT-LScat2003}{book}{
      author={Cornea, Octav},
      author={Lupton, Gregory},
      author={Oprea, John},
      author={Tanr\'{e}, Daniel},
       title={Lusternik-{S}chnirelmann category},
      series={Mathematical Surveys and Monographs},
   publisher={American Mathematical Society, Providence, RI},
        date={2003},
      volume={103},
        ISBN={0-8218-3403-5},
         url={https://doi.org/10.1090/surv/103},
      review={\MR{1990857}},
}

\bib{ChodoshMantoulidis-PWidths2023}{article}{
      author={Chodosh, Otis},
      author={Mantoulidis, Christos},
       title={The p-widths of a surface},
        date={2023},
        ISSN={0073-8301,1618-1913},
     journal={Publ. Math. Inst. Hautes \'{E}tudes Sci.},
      volume={137},
       pages={245\ndash 342},
         url={https://doi.org/10.1007/s10240-023-00141-7},
      review={\MR{4588597}},
}

\bib{Croke-ShortestGeo1982}{article}{
      author={Croke, Christopher~B.},
       title={Poincar\'{e}'s problem and the length of the shortest closed geodesic on a convex hypersurface},
        date={1982},
        ISSN={0022-040X},
     journal={J. Differential Geometry},
      volume={17},
      number={4},
       pages={595\ndash 634 (1983)},
         url={http://projecteuclid.org/euclid.jdg/1214437488},
      review={\MR{683167}},
}

\bib{Freire-SteinerConvex2011}{article}{
      author={Freire, Alexandre},
       title={The existence problem for {S}teiner networks in strictly convex domains},
        date={2011},
        ISSN={0003-9527},
     journal={Arch. Ration. Mech. Anal.},
      volume={200},
      number={2},
       pages={361\ndash 404},
         url={https://doi.org/10.1007/s00205-011-0414-2},
      review={\MR{2787585}},
}

\bib{Grayson-ShortCurves1989}{article}{
      author={Grayson, Matthew~A.},
       title={Shortening embedded curves},
        date={1989},
        ISSN={0003-486X,1939-8980},
     journal={Ann. of Math. (2)},
      volume={129},
      number={1},
       pages={71\ndash 111},
         url={https://doi.org/10.2307/1971486},
      review={\MR{979601}},
}

\bib{Hatcher-Ktheory2017}{book}{
      author={Hatcher, Allen},
       title={Vector bundles and k-theory},
        date={2017},
         url={https://pi.math.cornell.edu/~hatcher/VBKT/VB.pdf},
}

\bib{Heppes-NetsSphere1964}{article}{
      author={Heppes, Alad\'{a}r},
       title={Isogonale sph\"{a}rische {N}etze},
        date={1964},
        ISSN={0524-9007},
     journal={Ann. Univ. Sci. Budapest. E\"{o}tv\"{o}s Sect. Math.},
      volume={7},
       pages={41\ndash 48},
      review={\MR{173193}},
}

\bib{HassMorgan-NetsSphere1996}{article}{
      author={Hass, Joel},
      author={Morgan, Frank},
       title={Geodesic nets on the {$2$}-sphere},
        date={1996},
        ISSN={0002-9939},
     journal={Proc. Amer. Math. Soc.},
      volume={124},
      number={12},
       pages={3843\ndash 3850},
         url={https://doi.org/10.1090/S0002-9939-96-03492-2},
      review={\MR{1343696}},
}

\bib{HassMorgan-Bubbles1996}{article}{
      author={Hass, Joel},
      author={Morgan, Frank},
       title={Geodesics and soap bubbles in surfaces},
        date={1996},
        ISSN={0025-5874},
     journal={Math. Z.},
      volume={223},
      number={2},
       pages={185\ndash 196},
         url={https://doi.org/10.1007/PL00004560},
      review={\MR{1417428}},
}

\bib{IvanovPtitsynaTuzhilin-MinNetsTori1992}{article}{
      author={Ivanov, A.~O.},
      author={Ptitsyna, I.~V.},
      author={Tuzhilin, A.~A.},
       title={Classification of closed minimal nets on two-dimensional flat tori},
        date={1992},
        ISSN={0368-8666},
     journal={Mat. Sb.},
      volume={183},
      number={12},
       pages={3\ndash 44},
         url={https://doi.org/10.1070/SM1994v077n02ABEH003448},
      review={\MR{1213362}},
}

\bib{IvanovTuzhilin-MinNetsRev2016}{incollection}{
      author={Ivanov, Alexander~O.},
      author={Tuzhilin, Alexey~A.},
       title={Minimal networks: a review},
        date={2016},
   booktitle={Advances in dynamical systems and control},
      series={Stud. Syst. Decis. Control},
      volume={69},
   publisher={Springer, [Cham]},
       pages={43\ndash 80},
      review={\MR{3616371}},
}

\bib{IvanovTuzhilin-MinNets1994}{book}{
      author={Ivanov, Alexandr~O.},
      author={Tuzhilin, Alexei~A.},
       title={Minimal networks},
    subtitle={The steiner problem and its generalizations},
   publisher={CRC Press, Boca Raton, FL},
        date={1994},
        ISBN={0-8493-8642-X},
      review={\MR{1271779}},
}

\bib{Iwase-LScatSphere-2003}{article}{
      author={Iwase, Norio},
       title={Lusternik-{S}chnirelmann category of a sphere-bundle over a sphere},
        date={2003},
        ISSN={0040-9383},
     journal={Topology},
      volume={42},
      number={3},
       pages={701\ndash 713},
         url={https://doi.org/10.1016/S0040-9383(02)00026-5},
      review={\MR{1953245}},
}

\bib{Klingenberg-PoincareGeoConvex2004}{article}{
      author={Klingenberg, Wilhelm P.~A.},
       title={Poincar\'{e}'s closed geodesic on a convex surface},
        date={2004},
        ISSN={0002-9947,1088-6850},
     journal={Trans. Amer. Math. Soc.},
      volume={356},
      number={6},
       pages={2545\ndash 2556},
         url={https://doi.org/10.1090/S0002-9947-04-03444-0},
      review={\MR{2048529}},
}

\bib{LiokumovichStaffa-GenDen2024}{article}{
      author={Liokumovich, Yevgeny},
      author={Staffa, Bruno},
       title={Generic density of geodesic nets},
        date={2024},
     journal={Selecta Math. (N.S.)},
      volume={30},
      number={14},
         url={https://doi.org/10.1007/s00029-023-00901-7},
}

\bib{LusternikSchnirelmann-3geos1929}{article}{
      author={Lusternik, Lazar},
      author={Schnirelmann, Lev},
       title={Existence de trois géodésiques fermées sur toute surfaces de genre 0},
        date={1929},
     journal={C. R. Acad. Sci. Paris},
      volume={188},
       pages={534\ndash 536},
}

\bib{MartelliNovagaPludaRiolo-Spines2017}{article}{
      author={Martelli, Bruno},
      author={Novaga, Matteo},
      author={Pluda, Alessandra},
      author={Riolo, Stefano},
       title={Spines of minimal length},
        date={2017},
        ISSN={0391-173X},
     journal={Ann. Sc. Norm. Super. Pisa Cl. Sci. (5)},
      volume={17},
      number={3},
       pages={1067\ndash 1090},
      review={\MR{3726835}},
}

\bib{Morgan-Currents1989}{article}{
      author={Morgan, Frank},
       title={Size-minimizing rectifiable currents},
        date={1989},
        ISSN={0020-9910},
     journal={Invent. Math.},
      volume={96},
      number={2},
       pages={333\ndash 348},
         url={https://doi.org/10.1007/BF01393966},
      review={\MR{989700}},
}

\bib{Morgan-SoapBubbles1994}{article}{
      author={Morgan, Frank},
       title={Soap bubbles in {${\bf R}^2$} and in surfaces},
        date={1994},
        ISSN={0030-8730},
     journal={Pacific J. Math.},
      volume={165},
      number={2},
       pages={347\ndash 361},
         url={http://projecteuclid.org/euclid.pjm/1102621620},
      review={\MR{1300837}},
}

\bib{Nishimoto-LScatStiefel2007}{article}{
      author={Nishimoto, Tetsu},
       title={On the {L}usternik-{S}chnirelmann category of {S}tiefel manifolds},
        date={2007},
        ISSN={0166-8641},
     journal={Topology Appl.},
      volume={154},
      number={9},
       pages={1956\ndash 1960},
         url={https://doi.org/10.1016/j.topol.2007.02.002},
      review={\MR{2319267}},
}

\bib{NabutovskyParsch-Nets2023}{article}{
      author={Nabutovsky, Alexander},
      author={Parsch, Fabian},
       title={Geodesic {N}ets: {S}ome {E}xamples and {O}pen {P}roblems},
        date={2023},
        ISSN={1058-6458},
     journal={Exp. Math.},
      volume={32},
      number={1},
       pages={1\ndash 25},
         url={https://doi.org/10.1080/10586458.2020.1743216},
      review={\MR{4574417}},
}

\bib{NabutovskyRotman-ShapeNets2007}{article}{
      author={Nabutovsky, Alexander},
      author={Rotman, Regina},
       title={Shapes of geodesic nets},
        date={2007},
        ISSN={1465-3060,1364-0380},
     journal={Geom. Topol.},
      volume={11},
       pages={1225\ndash 1254},
         url={https://doi.org/10.2140/gt.2007.11.1225},
      review={\MR{2326944}},
}

\bib{Poincare-GeosConvex1905}{article}{
      author={Poincar\'{e}, Henri},
       title={Sur les lignes g\'{e}od\'{e}siques des surfaces convexes},
        date={1905},
        ISSN={0002-9947,1088-6850},
     journal={Trans. Amer. Math. Soc.},
      volume={6},
      number={3},
       pages={237\ndash 274},
         url={https://doi.org/10.2307/1986219},
      review={\MR{1500710}},
}

\bib{Rotman-ShortGeoNet2007}{article}{
      author={Rotman, Regina},
       title={The length of a shortest geodesic net on a closed {R}iemannian manifold},
        date={2007},
        ISSN={0040-9383},
     journal={Topology},
      volume={46},
      number={4},
       pages={343\ndash 356},
         url={https://doi.org/10.1016/j.top.2006.10.003},
      review={\MR{2321036}},
}

\bib{staffa2023bumpy}{misc}{
      author={Staffa, Bruno},
       title={Bumpy metrics theorem for geodesic nets},
        date={2023},
         url={https://arxiv.org/abs/2107.12446},
        note={Available at \eprint{2107.12446}},
}

\end{biblist}
\end{bibdiv}

\end{document}